 \newenvironment{dedication}
        {\vspace{1ex}\begin{quotation}\begin{center}\begin{em}}
        {\par\end{em}\end{center}\end{quotation}}
\newtheorem{conjecture}{Conjecture}
\newtheorem{theorem}{Theorem}
\newtheorem{lemma}{Lemma}
\newtheorem{corollary}{Corollary}
\newtheorem{proposition}{Proposition}
\newcommand{\legendre}[2]{\genfrac{(}{)}{}{}{#1}{#2}}
\newtheorem{remark}{Remark}
\begin{document}

\title{Character Sums and the Riemann Hypothesis}
\author{Brian Conrey}
 \thanks{Research supported by an FRG grant from NSF}
 
\maketitle
 
  \begin{dedication}
 Dedicated to Henryk on his semisesquicentennial 
 \end{dedication}

 \begin{abstract}
 We prove that an innocent looking inequality implies the Riemann Hypothesis and show a way to approach this inequality through sums of Legendre symbols.
 \end{abstract}
 
\parindent=0cm
\parskip=.2cm

Let 
$$f(x)=\sum_{n=1}^\infty \frac {\lambda(n)\sin 2\pi n x}{n^2}$$
where $\lambda$ is the Liouville lambda-function\footnote{$\lambda$ is completely multiplicative and takes the value $-1$ on primes so that $\lambda(p_1^{e_1}\dots p_r^{e_r} )=(-1)^{e_1+\dots+e_r}.$}.  Since $|\lambda(n)|=1$, this series is absolutely convergent for real $x$, so that $f$ is continuous, odd and periodic with period 1 on $\mathbb R$.
Here is a plot of $f(x)$ for $0\leqslant x\leqslant 1$ using 1000 terms of the series defining $f$:
\begin{center}
 \includegraphics[scale=0.9]{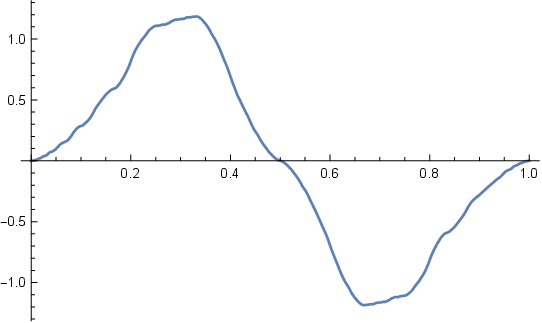}
\end{center}

\begin{theorem}
If  $f(x)\geqslant 0$ for $0\leqslant x \leqslant 1/4$, then the Riemann Hypothesis is true.
\end{theorem}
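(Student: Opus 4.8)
The plan is to realize the Dirichlet series $\sum_n \lambda(n)n^{-s}=\zeta(2s)/\zeta(s)$ as an (incomplete) Mellin transform of $f$ over the very interval $[0,1/4]$ on which positivity is assumed, and then to run a Landau-type positivity argument. Concretely, I would study
$$G(s):=\int_0^{1/4} f(x)\,x^{s-3}\,dx,$$
the exponent $s-3$ being chosen precisely so that the sine Mellin transform $\int_0^\infty \sin(2\pi n x)x^{s-3}\,dx=(2\pi n)^{2-s}\Gamma(s-2)\sin\frac{\pi(s-2)}{2}$ (valid for $2<\Re s<3$) turns the coefficient $\lambda(n)/n^2$ into $(2\pi)^{2-s}\lambda(n)n^{-s}$. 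Splitting $\int_0^{1/4}=\int_0^\infty-\int_{1/4}^\infty$ term by term (legitimate for $2<\Re s<3$, where everything converges absolutely) gives the identity
$$G(s)=(2\pi)^{2-s}\,\Gamma(s-2)\,\sin\tfrac{\pi(s-2)}{2}\,\frac{\zeta(2s)}{\zeta(s)}\;-\;(2\pi)^{2-s}\,T(s),\qquad T(s):=\sum_{n=1}^\infty\frac{\lambda(n)}{n^s}\int_{\pi n/2}^\infty \sin u\,u^{s-3}\,du.$$
A short estimate ($\int_A^\infty \sin u\,u^{s-3}\,du=O(A^{\Re s-3})$ for $\Re s<3$) shows the series for $T$ converges like $\sum n^{-3}$ uniformly on compacta, so $T$ is holomorphic in the whole half-plane $\Re s<3$ and contributes no poles.

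Next I would exploit positivity. The substitution $x=e^{-t}$ writes $G(s)=\int_{\log 4}^\infty a(t)\,e^{-(s-2)t}\,dt$ with $a(t):=f(e^{-t})$, and the hypothesis $f\ge 0$ on $[0,1/4]$ is exactly the statement that $a(t)\ge0$ for all $t\ge\log4$. Thus $G$ is a Laplace transform of a nonnegative function. Since $f$ is bounded, the integral converges for $\Re s>2$, so the abscissa of convergence $\sigma_c$ is finite, and by \textbf{Landau's theorem} the real point $s=\sigma_c$ must be a singularity of $G$.

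The decisive step is then to locate the rightmost real singularity of $G$ using the identity above. On the real segment $(1/2,3)$ the factor $T$ is holomorphic, so the only candidate singularities come from $\Gamma(s-2)\sin\frac{\pi(s-2)}{2}\,\zeta(2s)/\zeta(s)$. The pole of $\Gamma(s-2)$ at $s=2$ is killed by the zero of $\sin\frac{\pi(s-2)}{2}$ there; the pole of $\Gamma(s-2)$ at $s=1$ is cancelled by the \emph{zero} of $\zeta(2s)/\zeta(s)$ coming from the pole of $\zeta(s)$ at $s=1$; and $\zeta(s)$ has no zero on the real interval $(0,1)$. The only genuine singularity left in $(1/2,3)$ is the simple pole at $s=1/2$ coming from the pole of $\zeta(2s)$. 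Hence $G$ is holomorphic on $(1/2,2]$, so Landau forces $\sigma_c\le 1/2$, and therefore $G$ is holomorphic in the entire half-plane $\Re s>1/2$. Finally, if $\zeta(\rho)=0$ for some $\rho$ with $\Re\rho>1/2$, then at $s=\rho$ the factors $\Gamma(s-2)$ and $\sin\frac{\pi(s-2)}{2}$ are finite and nonzero, and $\zeta(2\rho)\ne0$ because $\Re(2\rho)>1$; so $\zeta(2s)/\zeta(s)$, and hence $G$, would have a pole at $\rho$ with $\Re\rho>1/2$, contradicting the analyticity just proved. Therefore $\zeta(s)\ne0$ for $\Re s>1/2$, which is the Riemann Hypothesis.

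I expect the main obstacle to be the bookkeeping in the third step rather than any single hard inequality: one must justify the term-by-term splitting and the analytic continuation of the identity out of $2<\Re s<3$, verify rigorously that $T$ is holomorphic and pole-free on $\Re s<3$, and confirm that the two pole--zero cancellations at $s=1,2$ are exact so that no spurious real singularity survives in $(1/2,2]$. A secondary point requiring care is the precise form of Landau's theorem for Laplace transforms (finiteness of $\sigma_c$ and that $\sigma_c$ is a singular point), together with the observation that a hypothetical zero $\rho$ produces an \emph{uncancelled} pole, which hinges on $\zeta(2\rho)\ne0$.
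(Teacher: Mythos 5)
Your argument is correct, and it rests on the same two pillars as the paper's proof: the identity $\sum_n\lambda(n)n^{-s}=\zeta(2s)/\zeta(s)$ realized through a Mellin transform of $f$, and Landau's theorem forcing the rightmost singularity of a transform of a nonnegative function to be real, so that a hypothetical zero $\rho$ with $\Re\rho>1/2$ produces an uncancelled non-real pole to the right of the genuine real singularity at $s=1/2$ (the paper's normalization places these at $\rho-1$ and $-1/2$). Where you genuinely diverge is in the treatment of the complementary piece. The paper starts from the full transform $\int_0^\infty f(x)x^{s-2}\,dx=\pi X(1-s)\zeta(2s+2)/((1-s)\zeta(s+1))$, splits at $x=4$, and must show the tail $I_2(s)=\int_4^\infty f(x)x^{s-2}\,dx$ is entire; this requires integrating by parts against the bounded antiderivative $F(x)=\int_0^x f$ (which vanishes at integers by periodicity of $f$) to get analyticity for $\Re s<2$, followed by a bootstrap using the known analyticity of $I_1+I_2$ for $\Re s>0$ to conclude entirety. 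You instead integrate only over $(0,1/4]$ --- exactly the interval of assumed positivity --- and expand the complementary term $T(s)$ termwise into incomplete sine integrals bounded by $O_s(n^{\Re s-3})$, so that $T$ converges like $\sum n^{-3}$ and is holomorphic on all of $\Re s<3$ by a direct estimate. This buys you two things: you never need the periodicity of $f$ or the bootstrap, and Landau is applied to an integral whose integrand is nonnegative on its entire range of integration (the paper applies it to $\int_{1/4}^\infty f(1/x)x^{-s}\,dx$, whose integrand is guaranteed nonnegative only for $x\geqslant 4$, so the entire piece $\int_{1/4}^4$ must implicitly be set aside). The price is the Fubini and conditional-convergence bookkeeping you already flag, plus the need to verify the pole--zero cancellations at $s=2$ (against $\sin\tfrac{\pi(s-2)}{2}$) and $s=1$ (against the pole of $\zeta(s)$), which the paper's choice of kernel $X(1-s)/(1-s)$, with poles only at $s=0,-2,-4,\dots$, handles automatically. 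Both routes are sound; yours is arguably the more self-contained.
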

 
 Theorem 1 is deceptive in that it looks like it should be a a simple matter to prove that $f(x)$ is non-negative. A problem is that it is not clear  whether 
$f(x)$ is differentiable or not, and even if it is it would be difficult to estimate the derivative. So, proving that $f(x)>0$ at some point doesn't immediately tell us about $f(x)$ at nearby points. 

The ``$1/4$''
in Theorem 1 can be replaced by any positive constant. So the real issue is trying to prove that $f(x) >0$ for small positive $x$. 

Note that 
$$\left|\sum_{n=N+1}^\infty \frac{\lambda(n)\sin 2\pi n x}{n^2}\right| < \int_N^\infty u^{-2}~du = \frac 1 N$$
so that if for some $x$ there is an $N$ such that 
\begin{eqnarray} \label{eqn:test} \sum_{n=1}^N \frac{\lambda(n)\sin 2\pi n x}{n^2} \geqslant \frac 1 N\end{eqnarray}
then, it must be the case that $f(x)>0$.  We will use this idea a little later.

We can give an   ``explicit formula'' for $f$ in terms of the zeros $\rho=\beta+i\gamma$ of $\zeta$:
\begin{theorem} Assuming the Riemann Hypothesis, 
$$f(x)=-\frac{4 \pi^2 x^{3/2}}{3 \zeta(1/2)}-\frac{8 \pi^2}{3}x^{3/2}\sum_{n\leqslant 4x} \frac{\ell(n)}{\sqrt{n}}\bigg(1-\frac{n}{4x}\bigg)^{3/2}
+\pi \lim_{T\to \infty} \sum_{\rho=1/2+i\gamma\atop |\gamma|\leqslant T}\operatornamewithlimits{Res}_{z=\rho-1} \frac{X(1-z)\zeta(2z+2)x^{1-z}}{(1-z)\zeta(z+1)}.$$
\end{theorem}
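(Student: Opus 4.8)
The plan is to realise $f$ as a single Mellin--Barnes integral and then deform the contour, reading off the three terms as (i) a polar contribution, (ii) the leftover integral re-expanded through the functional equation, and (iii) the sum over zeros. First I would insert the Mellin pair $\sin y=\frac{1}{2\pi i}\int_{(c)}\Gamma(s)\sin(\pi s/2)y^{-s}\,ds$ (valid for $0<c<1$) with $y=2\pi nx$ into the defining series, interchange sum and integral (legitimate since $\sum_n\lambda(n)n^{-s-2}=\zeta(2s+4)/\zeta(s+2)$ converges absolutely for $\Re s>-1$), using the Dirichlet series $\sum_n\lambda(n)n^{-s}=\zeta(2s)/\zeta(s)$. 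After the shift $s=z-1$ and the elementary identity $\Gamma(z-1)\sin\!\big(\tfrac{\pi(z-1)}{2}\big)(2\pi)^{1-z}=\dfrac{\pi X(1-z)}{1-z}$ (a rearrangement of $\zeta(s)=X(s)\zeta(1-s)$), this gives
\[
f(x)=\frac{1}{2\pi i}\int_{(3/2)}\frac{\pi X(1-z)\zeta(2z+2)x^{1-z}}{(1-z)\zeta(z+1)}\,dz=:\frac{1}{2\pi i}\int_{(3/2)}g(z)\,dz .
\]

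Next I would shift the contour leftward to $\Re z=-1-\delta$. Assuming RH, the only singularities crossed lie on $\Re z=-1/2$: the simple pole of $\zeta(2z+2)$ at $z=-1/2$, and the simple poles of $1/\zeta(z+1)$ at $z=\rho-1$ with $\rho=\tfrac12+i\gamma$. Every other candidate pole cancels --- the pole of $1/(1-z)$ at $z=1$ against the zero of $X(1-z)$ there, and the poles of $X(1-z)$ at $z=0,-2,-4,\dots$ against the pole/zeros of $1/\zeta(z+1)$ and of $\zeta(2z+2)$ --- and one checks each cancellation is exact. Using $\operatornamewithlimits{Res}_{z=-1/2}\zeta(2z+2)=\tfrac12$ together with $X(3/2)=-4\pi$, the residue at $z=-1/2$ is exactly the first term $-\tfrac{4\pi^2x^{3/2}}{3\zeta(1/2)}$; the residues at $z=\rho-1$, collected in the symmetric limit $\lim_{T\to\infty}\sum_{|\gamma|\le T}$, give the third term. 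RH is used here both to place all these poles on $\Re z=-1/2$ (so that the strip $-1/2<\Re z<3/2$ is pole-free) and to control $1/\zeta(z+1)$ during the shift.

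It then remains to identify the leftover integral $J=\frac{1}{2\pi i}\int_{(-1-\delta)}g(z)\,dz$ with the middle term. Here I would apply the functional equation in the forms $X(1-z)=1/X(z)$, $\zeta(2z+2)=X(2z+2)\zeta(-1-2z)$, $\zeta(z+1)=X(z+1)\zeta(-z)$, and then the Legendre duplication formula to collapse the Gamma factors into $\frac{X(2z+2)}{X(z)X(z+1)}=-4^{-z}\pi^{3/2}\,\dfrac{\Gamma(-\frac12-z)}{\Gamma(1-z)}$. On $\Re z=-1-\delta$ the dual series $\frac{\zeta(-1-2z)}{\zeta(-z)}=\sum_n\ell(n)n^{z}$ converges absolutely, where $\ell$ is defined by $\sum_n\ell(n)n^{-s}=\frac{\zeta(2s-1)}{\zeta(s)}$, i.e. $\ell(n)=\sum_{d^2e=n}d\,\mu(e)$. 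Integrating term by term and writing $x^{1-z}n^{z}4^{-z}=x\,(n/4x)^{z}$, each summand becomes a Mellin--Barnes integral of beta type; with $(1-z)\Gamma(1-z)=\Gamma(2-z)$ and the standard evaluation $\frac{1}{2\pi i}\int_{(c)}\frac{\Gamma(s)}{\Gamma(s+\alpha+1)}Y^{-s}\,ds=\frac{1}{\Gamma(\alpha+1)}(1-Y)_{+}^{\alpha}$ (positive part; here $\alpha=\tfrac32$, $Y=n/4x$, $\Gamma(\tfrac52)=\tfrac{3\sqrt\pi}{4}$) one obtains exactly $-\tfrac{8\pi^2}{3}x^{3/2}\frac{\ell(n)}{\sqrt n}\big(1-\frac{n}{4x}\big)_{+}^{3/2}$ per term, hence the displayed finite sum over $n\le 4x$.

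The main obstacle is the analytic bookkeeping of the contour shift rather than any single computation: one must bound $g$ on the horizontal segments as $|\Im z|\to\infty$, uniformly across the strip, which requires Stirling bounds for $X$, convexity bounds for $\zeta(2z+2)$, and --- most delicately --- control of $1/\zeta(z+1)$ near the critical line, so that the truncation height $T$ must be chosen to avoid the ordinates $\gamma$ and the sum over zeros taken as a symmetric limit. The same care justifies the term-by-term integration of the conditionally convergent pieces. Granting these estimates, adding the residue at $z=-1/2$, the sum over $z=\rho-1$, and the re-expanded integral $J$ reproduces the three displayed terms.
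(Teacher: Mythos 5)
Your proposal follows essentially the same route as the paper: the same Mellin--Barnes representation $f(x)=\frac{1}{2\pi i}\int_{(c)}\frac{\pi X(1-z)\zeta(2z+2)x^{1-z}}{(1-z)\zeta(z+1)}\,dz$, the same leftward contour shift past $z=-1/2$ and $z=\rho-1$ (with the same care about choosing heights $T$ where $1/\zeta(z+1)$ is controlled), and the same functional-equation re-expansion of the leftover integral into the $\ell(n)$ sum via the beta-type integral (the paper's Lemma 2). The only slip is your starting abscissa $\Re z=3/2$: the Mellin pair for $\sin$ requires $0<c<1$ and the integral is absolutely convergent only for $0<c<1/2$, so the contour should begin there, as in the paper.
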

Here  $\ell(n)$ is defined through its generating function
$$\sum_{n=1}^\infty \ell(n)n^{-s}=\frac{\zeta(2s-1)}{\zeta(s)}$$
for $\Re s >1.$ Also, 
$X(s)$ is the factor from the functional equation for $\zeta(s)$ which can be defined by
$$X(s)^{-1}=X(1-s)=\frac{\zeta(1-s)}{\zeta(s)}=2 (2\pi)^{-s}\Gamma(s) \cos \tfrac {\pi s}{2}  .$$ 
Note that if the zeros of $\zeta(s)$ are simple, then the term with the sum over the zeros of $\zeta$ becomes 
$$\pi \sum_{\rho}\frac{X(2-\rho)\zeta(2\rho)x^{2-\rho}}{(2-\rho)\zeta'(\rho)}.$$

Theorem 2 is nearly a converse to Theorem 1 in the sense that if RH is true and all the zeros are simple and 
\begin{eqnarray} \label{eqn:ineq} \sum_{\rho}\bigg|\frac{X(2-\rho)\zeta(2\rho) }{(2-\rho)\zeta'(\rho)}\bigg|\leqslant -\frac{4\pi}{3\zeta(1/2)}
\end{eqnarray}
then
$f(x)\geqslant 0$
for $0\leqslant x \leqslant 1/4$.
 Note that
$$  -\frac{4\pi}{3\zeta(1/2)}=2.86834\dots$$
and
$$\sum_{|\gamma|\leqslant 1000}\bigg|\frac{X(2-\rho)\zeta(2\rho) }{(2-\rho)\zeta'(\rho)}\bigg|=
0.264954\dots$$
so that the inequality (\ref{eqn:ineq}) seems plausible.

Finally we remark that the  formula of Theorem 2 for $f(x)$ hides very well the fact that $f(x)$ is periodic with period 1!

\section{Prior results}
There has been quite a lot of work connecting partial weighted sums of the Liouville and the Riemann Hypothesis.
We refer to [BFM] for a nice description of past work. In this paper the authors prove that the smallest value of $x$ for which 
$$\sum_{n\le x}\frac{ \lambda(n)}{n} <0$$
is $x=72 185 376 951 205$.

\section{Character sums}

A possible approach to proving that $f(x)>0$ for small $x>0$ lies in the fact that $\lambda$ is completely multiplicative and takes the values $\pm 1$. This scenario resembles quadratic Dirichlet characters (for simplicity think Legendre symbols) except that Dirichlet characters can also take the value 0.    By the Chinese Remainder Theorem, for any $N$ there is a prime number $q$ such that 
$\lambda(n)=\legendre{n}{q} $ for all $n\leqslant N$ where $\legendre{.}{q}$ is the Legendre symbol\footnote{$\legendre{n}{q}=0$ if $(n,q)>1$; $\legendre{n}{q}=+1$ if $n$ is a square mod $q$; and $\legendre{n}{q}=-1$ if $n$ is not a square modulo $q$.} mod $q$.   As an example:
   $$\lambda(n)=\legendre{n}{163} $$
   for all $n\leqslant 40$, but they differ at $n=41$.
  
  Let $$f_q(x)=\sum_{n=1}^\infty \frac{\legendre{n}{q} \sin 2\pi n x}{n^2}$$
  be the Fourier sine series with $\lambda(n)$ replaced by $\legendre{n}{q}$ . If $f_q(x)\geqslant 0$  for  $0\leqslant x\leqslant 1/4$ for a sufficiently large set of $q$, then it must  also be the case that  $f(x)\geqslant 0$ for $0\leqslant x\leqslant 1/4$. 
  (The proof is that if $f(x_0)<0$ for some $0<x_0<1/4$,  then  we can find a $q$ such that $\legendre{n}{q}=\lambda(n)$ for all $n\leqslant N$ where $N$ is chosen so large that $|f(x_0)| >1/N$; then it must be the case by the analog of (\ref{eqn:test}) for $f_q$ that $f_q(x_0)<0$.) 
  The same assertion but with $q$ restricted to primes congruent to 3 mod 8 is also valid, since the Legendre symbols for these $q$ can also imitate $\lambda(n)$ for arbitrarily long stretches $1\leqslant n\leqslant N$. 
We can express this as follows:
\begin{theorem} 
If 
$$f_q(x)\geqslant 0$$
for all $0\leqslant x\leqslant 1/4$ and all primes $ q$  congruent to 3 mod 8, then the Riemann Hypothesis is true.
\end{theorem}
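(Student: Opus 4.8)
The plan is to prove the contrapositive: assuming the Riemann Hypothesis fails, I will exhibit a prime $q\equiv 3\pmod 8$ and a point $x_0\in[0,1/4]$ at which $f_q(x_0)<0$. By the contrapositive of Theorem 1, the failure of RH produces a point $x_0\in[0,1/4]$ with $f(x_0)<0$; write $f(x_0)=-\delta$ with $\delta>0$. The strategy is then to transfer this strict negativity from $f$ to $f_q$ for a well-chosen $q$, exploiting that both $\lambda$ and $\legendre{\cdot}{q}$ are completely multiplicative and $\pm1$-valued.

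First I would localize the negativity to a finite partial sum. The tail estimate preceding (\ref{eqn:test}) holds verbatim with $\lambda(n)$ replaced by any sequence bounded by $1$ in modulus, in particular by $\legendre{n}{q}$, so choose $N$ with $1/N<\delta/2$. Then the partial sum $S_N:=\sum_{n=1}^N \lambda(n)\sin(2\pi n x_0)/n^2$ satisfies $S_N<-\delta/2$, since it differs from $f(x_0)=-\delta$ by a tail of modulus less than $\delta/2$. The key point is that $S_N$ depends only on $\lambda(1),\dots,\lambda(N)$: if I can find a prime $q>N$ with $\legendre{n}{q}=\lambda(n)$ for all $n\leqslant N$, then the first $N$ terms of $f_q(x_0)$ reproduce $S_N$ exactly, while the tail of $f_q(x_0)$ contributes less than $1/N<\delta/2$ in modulus, whence $f_q(x_0)<-\delta/2+\delta/2=0$, contradicting the hypothesis.

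The heart of the matter is therefore the imitation lemma: for every $N$ there is a prime $q\equiv 3\pmod 8$ with $q>N$ and $\legendre{n}{q}=\lambda(n)$ for all $n\leqslant N$. By complete multiplicativity it suffices to arrange $\legendre{p}{q}=-1=\lambda(p)$ for every prime $p\leqslant N$ (the condition $q>N$ ensuring $\legendre{n}{q}\neq 0$ for $n\leqslant N$). For $p=2$ the supplementary law gives $\legendre{2}{q}=-1$ precisely when $q\equiv 3,5\pmod 8$, so the hypothesis $q\equiv 3\pmod 8$ disposes of this prime automatically. For an odd prime $p\leqslant N$, since $q\equiv 3\pmod 4$ makes $(q-1)/2$ odd, quadratic reciprocity yields $\legendre{p}{q}=(-1)^{(p-1)/2}\legendre{q}{p}$, so requiring $\legendre{p}{q}=-1$ becomes a prescribed residue condition on $q\bmod p$ (namely $q$ a nonresidue mod $p$ when $p\equiv 1\pmod 4$, a residue when $p\equiv 3\pmod 4$). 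Each such condition places $q$ in a nonempty union of residue classes coprime to $p$; combining them over all $p\leqslant N$ together with $q\equiv 3\pmod 8$ via the Chinese Remainder Theorem puts $q$ in a nonempty class $a$ modulo $M=8\prod_{2<p\leqslant N}p$ with $\gcd(a,M)=1$. By Dirichlet's theorem on primes in arithmetic progressions this class contains infinitely many primes, in particular one with $q>N$, completing the lemma.

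The main obstacle is precisely this lemma, and within it the fact that the prescribed conditions are simultaneously satisfiable by an \emph{arithmetic progression coprime to the modulus}, so that Dirichlet's theorem applies. The role of the congruence $q\equiv 3\pmod 8$ is exactly to make the $p=2$ case come out as $-1$ while forcing $q\equiv 3\pmod 4$, so that reciprocity takes the clean form above. Everything else — the passage to the contrapositive through Theorem 1, the tail truncation, and the final sign comparison — is elementary once the lemma is in hand.
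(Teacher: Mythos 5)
Your proof is correct and follows essentially the same route as the paper: negate the conclusion via Theorem 1 to get $x_0$ with $f(x_0)<0$, truncate using the tail bound of (\ref{eqn:test}), and choose a prime $q\equiv 3\bmod 8$ whose Legendre symbol imitates $\lambda$ up to $N$. The paper merely asserts this imitation step via the Chinese Remainder Theorem, whereas you supply the full justification (quadratic reciprocity, the supplementary law at $2$, and Dirichlet's theorem on primes in progressions), which is a welcome filling-in of detail rather than a different approach.
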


\begin{remark} We could just as well have stated this theorem for $q\equiv 3 \bmod 4.$ However, the intention is that we are interested in $q$ for which $\chi_q$ imitates $\lambda$. Insisting that $\chi_q(2)=-1$ leads to the condition that $q\equiv 3 \bmod 8$.  \end{remark}

 The sums $f_q(x)$ still have the same problem in that it is tricky to prove for sure that they are positive for small positive $x$. However, the 
 analogue of Theorem 2 above is much simpler, is unconditional, and leads to a straightforward way to check, for any given fixed $q$,  that $f_q(x)\geqslant 0$ for $0\leqslant x \leqslant 1/4$.
 \begin{theorem} Let $x\geqslant 0$. Let $q\equiv 3 \bmod 8$ be squarefree.  Then
$$f_{q}(x)= 2 \pi x L_q(1) -\frac{2 \pi^2 x}{\sqrt{q}}\sum_{n\leqslant xq}\legendre{n}{q}  \big( 1-\tfrac {n}{xq}\big)$$
where
$$L_q(1)=\sum_{n=1}^\infty \frac{\legendre{n}{q}}{n}.$$
\end{theorem}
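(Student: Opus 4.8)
The plan is to evaluate $f_q$ in closed form by replacing the Legendre symbol with its finite Fourier (Gauss sum) expansion and then recognizing the resulting dilogarithms. Write $\chi_q(n)=\legendre{n}{q}$; since $q$ is squarefree and $q\equiv 3\bmod 8$ (in particular $q\equiv 3\bmod 4$), $\chi_q$ is a primitive, real, odd Dirichlet character modulo $q$, with Gauss sum $\tau(\chi_q)=i\sqrt q$. First I would write $f_q(x)=\Im\sum_{n\geqslant1}\chi_q(n)n^{-2}e^{2\pi inx}$ and substitute the separability identity $\chi_q(n)=\tau(\chi_q)^{-1}\sum_{a\bmod q}\chi_q(a)e^{2\pi ian/q}$, which is valid for \emph{all} $n$ precisely because $\chi_q$ is primitive. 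Interchanging the finite sum over $a$ with the absolutely convergent sum over $n$ gives
\[ \sum_{n\geqslant1}\frac{\chi_q(n)}{n^2}e^{2\pi inx}=\frac{1}{i\sqrt q}\sum_{a\bmod q}\chi_q(a)\,\mathrm{Li}_2\big(e^{2\pi i(x+a/q)}\big). \]

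The next step uses the classical Fourier expansion $\Re\,\mathrm{Li}_2(e^{2\pi i\theta})=\pi^2 B_2(\{\theta\})$, where $B_2(t)=t^2-t+\tfrac16$, while $\Im\,\mathrm{Li}_2(e^{2\pi i\theta})$ is the (real) Clausen function. Because the prefactor $1/(i\sqrt q)$ is purely imaginary, taking imaginary parts annihilates the Clausen contribution and retains only the $B_2$ term, producing the manifestly periodic closed form
\[ f_q(x)=-\frac{\pi^2}{\sqrt q}\sum_{a\bmod q}\chi_q(a)\,B_2\big(\{x+a/q\}\big). \]
This already explains the piecewise-linear shape of the graph, since breakpoints can occur only where some $x+a/q$ crosses an integer, i.e. at multiples of $1/q$.

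It remains to unfold the fractional parts. For $0\leqslant x<1$ I would split the sum according to whether $x+a/q<1$ or $\geqslant 1$ and use $B_2(t-1)-B_2(t)=2(1-t)$. The ``unwrapped'' part $\sum_{a}\chi_q(a)B_2(x+a/q)$ is a quadratic polynomial in $x$ whose coefficients are the character moments $\sum_a\chi_q(a)$, $\sum_a a\chi_q(a)$ and $\sum_a a^2\chi_q(a)$. Here $\sum_a\chi_q(a)=0$; the odd-character evaluation $\sum_{a=1}^{q-1}a\chi_q(a)=-\tfrac{q^{3/2}}{\pi}L_q(1)$ follows from the same Gauss-sum manipulation applied to $L(1,\chi_q)$ together with $\tau(\chi_q)=i\sqrt q$; and the involution $a\mapsto q-a$ gives $\sum_a a^2\chi_q(a)=q\sum_a a\chi_q(a)$. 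Substituting, the unwrapped part collapses to $-\tfrac{2x\sqrt q}{\pi}L_q(1)$, which after the prefactor $-\pi^2/\sqrt q$ contributes precisely $2\pi xL_q(1)$.

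Finally, the correction terms come from those $a$ with $x+a/q\geqslant1$, i.e. $a\geqslant q(1-x)$. Reindexing by $n=q-a$ turns this into a sum over $1\leqslant n\leqslant qx$, and the oddness $\chi_q(q-a)=-\chi_q(n)$ together with $2-2(x+a/q)=-2(x-n/q)$ converts each term into $2\chi_q(n)(x-n/q)$; multiplying by $-\pi^2/\sqrt q$ reproduces exactly $-\tfrac{2\pi^2x}{\sqrt q}\sum_{n\leqslant qx}\chi_q(n)\big(1-\tfrac{n}{qx}\big)$. The range $x\geqslant1$ then follows from periodicity of $f_q$ together with the identical bookkeeping. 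The main obstacle is not analytic convergence (the interchange is trivial) but the arithmetic heart: pinning down the normalization $\tau(\chi_q)=i\sqrt q$ and the resulting moment identity $\sum_a a\chi_q(a)=-q^{3/2}L_q(1)/\pi$, and then executing the fractional-part split without sign errors.
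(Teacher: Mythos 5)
Your proof is correct, but it takes a genuinely different route from the paper. The paper proves Theorem 4 by contour integration: Lemma 1 expresses $f_q(x)$ as $\frac{\pi}{2\pi i}\int_{(c)}L(s+1,\chi_q)X(1-s)x^{1-s}\,\frac{ds}{1-s}$, the contour is shifted left past the single pole at $s=0$ (whose residue produces $2\pi x L(1,\chi_q)$), the substitution $s\to -s$ and the functional equation for $L(s,\chi_q)$ are applied, and the remaining integral is evaluated as the finite sum $-\frac{2\pi^2x}{\sqrt q}\sum_{n\le xq}\chi_q(n)(1-\frac{n}{xq})$ via the Mellin formula of Lemma 3. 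You instead expand $\chi_q(n)$ by the Gauss sum (using primitivity), identify $\Re\,\mathrm{Li}_2(e^{2\pi i\theta})=\pi^2B_2(\{\theta\})$, and unfold the fractional parts; I have checked your bookkeeping (the identities $B_2(t-1)-B_2(t)=2(1-t)$, $\sum_a a^2\chi_q(a)=q\sum_a a\chi_q(a)$, and $\sum_a a\chi_q(a)=-q^{3/2}L_q(1)/\pi$) and it all works out. Your argument is more elementary and makes the hidden periodicity of the right-hand side completely transparent via the intermediate formula $f_q(x)=-\frac{\pi^2}{\sqrt q}\sum_{a}\chi_q(a)B_2(\{x+a/q\})$, which is attractive in its own right; the arithmetic input ($\tau(\chi_q)=i\sqrt q$) is of course equivalent in depth to the root number in the functional equation that the paper uses. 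The paper's Mellin approach has the advantage of running in parallel with the proof of Theorem 2, where $\lambda(n)$ admits no finite Fourier expansion. One small gap: the theorem is asserted for all $x\ge 0$, and your unfolding is carried out only for $0\le x<1$; deducing the general case ``by periodicity'' requires separately verifying that the stated right-hand side is itself periodic with period $1$ (the sum $\sum_{n\le xq}$ does not obviously wrap), whereas the contour-integral proof treats all $x>0$ uniformly. This is routine but should be said.
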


 Now Dirichlet's class number formula enters the picture.  Let $K=\mathbb Q(\sqrt{-q})$ be the imaginary quadratic field obtained by adjoining $\sqrt{-q}$ to the rationals $\mathbb Q$.
 Let $h(q)$ be the class number\footnote{The class number is a measure of how close to unique factorization the integers of $K$ are; $h(q)=1$ means the integers of $K$ can be factored into primes in only one way.} of $K$. 
 Dirichlet's formula is 
 $$h(q)=\frac{\sqrt{q}}{\pi} L_q(1)$$
 for squarefree $q\equiv 3 \bmod 4$ and $q>3$; (see [D] or [IK]).
 Thus, the Theorem above can be rephrased in terms of $h(q)$.
 Moreover, we can express $L_q(1)$ as a finite character sum
 $$L_q(1)= -\frac{\pi}{q^{3/2}}\sum_{n=1}^q n\legendre{n}{q} .$$
 Since $\legendre{n}{q}$ is an odd function of $q$ we also have
  $$L_q(1)= -\frac{2 \pi}{q^{3/2}}\sum_{n=1}^{\frac{q-1}{2}} n\legendre{n}{q} $$
  and
  $$h(q)=S_q(\frac q 2 )$$
  where 
  $$S_q(N):=\sum_{n\leqslant N}\legendre{n}{q} \big(1-\tfrac n N\big)  .$$
\begin{corollary} Let $q>3$ be squarefree with $q\equiv 3 \mod 8$.  
Then
$$f_q(x)=\frac{2\pi^2x}{\sqrt{q}} \left(S_q(\frac q2)-S_q(qx)\right).$$
 \end{corollary}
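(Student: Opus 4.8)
The plan is to read the corollary directly off Theorem 4, since the analytic content is already contained there; what remains is to recognize the two pieces of the Theorem 4 formula in terms of $S_q$. Recall that Theorem 4 gives, for squarefree $q\equiv 3\bmod 8$,
$$f_q(x)=2\pi x\,L_q(1)-\frac{2\pi^2 x}{\sqrt q}\sum_{n\leqslant xq}\legendre{n}{q}\Big(1-\tfrac{n}{xq}\Big).$$
First I would observe that, by the very definition $S_q(N)=\sum_{n\leqslant N}\legendre{n}{q}\big(1-\tfrac nN\big)$ taken with $N=qx$, the finite sum on the right is exactly $S_q(qx)$. Thus the second term is already $\tfrac{2\pi^2 x}{\sqrt q}S_q(qx)$, and the entire task reduces to rewriting the first term $2\pi x\,L_q(1)$ as $\tfrac{2\pi^2 x}{\sqrt q}S_q(\tfrac q2)$.

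For that I would invoke the two identities recorded just before the statement. Dirichlet's class number formula $h(q)=\tfrac{\sqrt q}{\pi}L_q(1)$ holds for squarefree $q\equiv 3\bmod 4$ with $q>3$; since $q\equiv 3\bmod 8$ implies $q\equiv 3\bmod 4$, it applies here and gives $L_q(1)=\tfrac{\pi}{\sqrt q}h(q)$. Combining this with the identity $h(q)=S_q(\tfrac q2)$ yields $L_q(1)=\tfrac{\pi}{\sqrt q}S_q(\tfrac q2)$, whence
$$2\pi x\,L_q(1)=2\pi x\cdot\frac{\pi}{\sqrt q}\,S_q(\tfrac q2)=\frac{2\pi^2 x}{\sqrt q}\,S_q(\tfrac q2).$$
Substituting both rewritten terms into the Theorem 4 formula and factoring out $\tfrac{2\pi^2 x}{\sqrt q}$ gives
$$f_q(x)=\frac{2\pi^2 x}{\sqrt q}\,S_q(\tfrac q2)-\frac{2\pi^2 x}{\sqrt q}\,S_q(qx)=\frac{2\pi^2 x}{\sqrt q}\Big(S_q(\tfrac q2)-S_q(qx)\Big),$$
which is the claimed identity.

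I do not expect any genuine obstacle: the corollary is a repackaging, with Theorem 4 supplying the real work and Dirichlet's formula supplying the arithmetic. The only points that deserve a moment's care are the compatibility of hypotheses --- that $q\equiv 3\bmod 8$ feeds both Theorem 4 and (via $q\equiv 3\bmod 4$) the class number formula, and that $q>3$ squarefree is needed for the latter --- together with the bookkeeping check that the argument $N=qx$ in $S_q(N)$ is indeed the one appearing in the summation constraint $n\leqslant xq$ of Theorem 4. Once these are noted, the displayed chain of equalities is immediate.
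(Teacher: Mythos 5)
Your proposal is correct and is exactly the argument the paper intends: the corollary is stated without separate proof precisely because it follows by substituting $L_q(1)=\tfrac{\pi}{\sqrt q}h(q)=\tfrac{\pi}{\sqrt q}S_q(\tfrac q2)$ (Dirichlet's formula plus the identity $h(q)=S_q(\tfrac q2)$ recorded just beforehand) into Theorem 4 and recognizing the finite sum there as $S_q(qx)$. Your remarks on the compatibility of the hypotheses ($q\equiv 3\bmod 8\Rightarrow q\equiv 3\bmod 4$, and $q>3$ squarefree for the class number formula) match the paper's setup.
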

Here is a plot of $f_{163}(x)=\frac{2\pi^2 x}{\sqrt{163}}(S_{163}(\frac{163 }{2})-S_{163 } (163 x))$ for $0\leqslant x\leqslant 1$ and a plot of the difference $f(x)-f_{163}(x)$:
\begin{center}
 \includegraphics[scale=0.9]{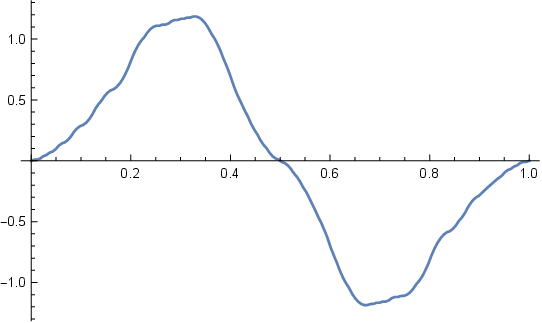}  \includegraphics[scale=0.9]{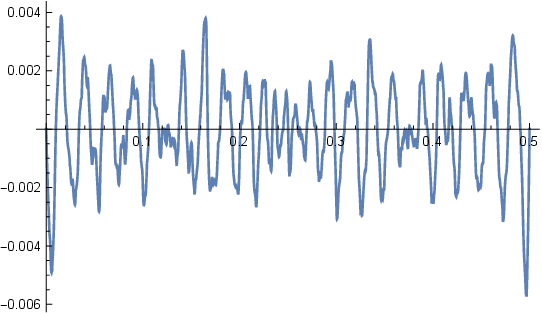}
\end{center}

We can use the corollary to prove that $f_{163}(x)\geqslant 0$ for $0\leqslant x\leqslant 1/2$  and consequently that $f(x)\geqslant 0$   for  $1/4>x\geqslant 0.043$ as follows.
\begin{eqnarray*}
f(x)&=& \sum_{n=1}^{40} \frac{\lambda(n)\sin 2 \pi n x}{n^2} +\frac \Theta {40}= \sum_{n=1}^{40} \frac{\chi_{163}(n)\sin 2 \pi n x}{n^2} +\frac \Theta {40}=f_{163}(x)+\frac \Theta {20}\\
&=& \frac{2\pi^2x}{\sqrt{163}} \left(S_{163}(\frac {163}2)-S_{163}(163x)\right)+\frac \Theta {20}
\end{eqnarray*}
where $\Theta $ denotes a number with absolute value at most 1, not necessarily the same at each occurrence.
Now for $a$ an integer, $S_{163}(163x)$ is constant for $x$ in the interval $[\frac{a}{163},\frac{a+1}{163})$. Therefore, $f_{163}(x) \geqslant \min\{f_{163}(\frac{a}{163}),f_{163}(\frac{a+1}{163})\}$ for $x$ in this interval.
We can tabulate these values:
\begin{eqnarray*} 
\begin{array}{|c|c|c|c|c|c|c|c|c|c|c|}
\hline
a&1&2&3&4&5&6&7&8&9&10\\
\hline
f_{163}(\frac{a}{163})&
  0.0095 &
  0.0095 &
  0.019 &
  0.038 &
  0.047 &
  0.066 &
  0.076 &
  0.095 &
  0.12 &
 0.14 \\
 \hline
\end{array}
\end{eqnarray*}

 Since $\frac{\Theta}{20}\leqslant .05$ it follows from (\ref{eqn:test}) that $f(x)\geqslant 0 $ for $0.25\geqslant x\geqslant \frac 7 {163}=0.043$.
\begin{corollary}
$f(x) \geqslant 0 $
for $0.043\leqslant x \leqslant 0.25$.
\end{corollary}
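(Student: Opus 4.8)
The plan is to transfer the problem from the arithmetically intractable function $f$ to the fully explicit function $f_{163}$, and then to settle a quantitative positivity statement for the latter by a finite computation. The engine is the fact, recorded above, that $\lambda(n)=\legendre{n}{163}$ for all $n\leqslant 40$. Writing each series as a length-$40$ partial sum plus a tail, and invoking the tail bound $\left|\sum_{n>N}\lambda(n)\sin(2\pi nx)/n^2\right|<1/N$ (and its analogue for $f_{163}$) with $N=40$, the two length-$40$ partial sums coincide term by term, so that $f(x)=f_{163}(x)+\Theta/20$ with $|\Theta|\leqslant 1$. It therefore suffices to prove the quantitative bound $f_{163}(x)\geqslant 1/20$ on the range in question: this already forces $f(x)\geqslant f_{163}(x)-1/20\geqslant 0$ (equivalently, it pushes the length-$40$ partial sum of $f$ past $1/40$, so that the criterion \eqref{eqn:test} applies).

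First I would make $f_{163}$ explicit via Corollary 1, giving $f_{163}(x)=\frac{2\pi^2 x}{\sqrt{163}}\big(S_{163}(\tfrac{163}{2})-S_{163}(163x)\big)$. The structural point is that on each interval $[a/163,(a+1)/163)$ with $a$ an integer the two sums $\sum_{n\leqslant 163x}\legendre{n}{163}$ and $\sum_{n\leqslant 163x}n\legendre{n}{163}$ are constant, so that $x\,S_{163}(163x)$, and hence $f_{163}(x)$, is an affine function of $x$ on that interval. An affine function attains its minimum on an interval at an endpoint, whence $f_{163}(x)\geqslant\min\{f_{163}(a/163),f_{163}((a+1)/163)\}$ there. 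This collapses the continuum of inequalities $f_{163}(x)\geqslant 1/20$ into the finitely many node values $f_{163}(a/163)$.

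It then remains to evaluate $f_{163}(a/163)$ for the integers $a$ covering $[0.043,0.25]$, namely $7\leqslant a\leqslant 41$, and to confirm that each exceeds $1/20=0.05$. The displayed values $0.076,0.095,0.12,0.14$ at $a=7,8,9,10$ clear this margin with room to spare, and since the node value $f_{163}(5/163)=0.047$ already sits below $0.05$, the interval-minimum criterion cannot be pushed much below $x=7/163=0.043$; this fixes the lower cutoff in the statement. With the node values in hand and the affine interpolation between them, the bound $f_{163}(x)\geqslant 0.05$ on $[0.043,0.25]$, and therefore $f(x)\geqslant 0$ there, follows at once.

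The main obstacle is not conceptual but one of rigor and bookkeeping. The decisive steps are (i) justifying the affine-on-subintervals claim carefully enough that the endpoint-minimum reduction is valid, in particular treating the right-hand half-open endpoints by continuity of $f_{163}$, and (ii) verifying that \emph{every} relevant node value $f_{163}(a/163)$ across the full range $7\leqslant a\leqslant 41$, not merely the handful tabulated, stays above the $0.05$ error margin, so that the transfer from $f_{163}$ back to $f$ never breaks down. Everything else is a routine finite evaluation of Legendre-symbol sums modulo $163$.
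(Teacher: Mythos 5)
Your argument is correct and follows the paper's proof essentially verbatim: the $\Theta/20$ transfer using $\lambda(n)=\chi_{163}(n)$ for $n\leqslant 40$ together with the tail bound, the explicit form of $f_{163}$ from Corollary 1, the reduction to node values $f_{163}(a/163)$ via the endpoint-minimum step, and the numerical check against the $0.05$ margin. If anything, your observation that $f_{163}$ is \emph{affine} on each interval $[a/163,(a+1)/163)$ (rather than that $S_{163}(163x)$ is constant there, as the paper loosely states) is the more precise justification of the endpoint-minimum inequality, and you rightly flag that the nodes $7\leqslant a\leqslant 41$, not just the ten tabulated, must all be verified.
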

It seems clear that for any given $\epsilon>0$ we could replace 0.043 by $\epsilon$ in this inequality with enough computation time. 
Also, if we use Euler products instead of Dirichlet series we can show that $f(x)\geqslant 0$ for $1/4\geqslant x\geqslant 0.011$.

The following conjecture seems surprising.
\begin{conjecture}
If $q\equiv 3 \bmod 8$ is squarefree, then $f_q(x)\geqslant 0$  for $0\leqslant x\leqslant 1/2$. 
\end{conjecture}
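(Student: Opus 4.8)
The plan is to first convert the conjecture into a purely finite, arithmetic inequality, and then attack that inequality as a Fejér-type positivity statement. By Corollary 1, for $0\le x\le \tfrac12$ we have $f_q(x)=\frac{2\pi^2 x}{\sqrt q}\big(S_q(\tfrac q2)-S_q(qx)\big)$, and $S_q(\tfrac q2)=h(q)>0$. Since the prefactor is nonnegative, the conjecture is \emph{equivalent} to
$$S_q(N)\le h(q)\qquad(0\le N\le \tfrac q2),$$
that is, to the assertion that the maximum of $N\mapsto S_q(N)$ on $[0,\tfrac q2]$ is attained at the right endpoint. Writing $S_q(N)=\sum_{0<k<N}\big(1-\tfrac kN\big)\legendre kq$ and subtracting, a short computation gives
$$h(q)-S_q(N)=\sum_{0<k<q/2} w_N(k)\,\legendre kq,$$
where $w_N$ is the nonnegative triangular (``tent'') weight that rises linearly from $0$ at $k=0$ to the value $1-\tfrac{2N}{q}$ at $k=N$ and then falls linearly back to $0$ at $k=\tfrac q2$. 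Thus the conjecture is exactly the statement that the quadratic character, paired against every such tent weight, is nonnegative -- a Cesàro/Fejér positivity for $\legendre{\cdot}{q}$.

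A first attempt is partial summation. Setting $A(n)=\sum_{k\le n}\legendre kq$ and summing by parts (the boundary terms vanish because $w_N$ vanishes at $0$ and $\tfrac q2$) rewrites the display as $\sum_k\big(w_N(k)-w_N(k+1)\big)A(k)$. Because $w_N$ first rises and then falls, these weights are negative on $[0,N)$ and positive on $[N,\tfrac q2)$, so even the much-studied (and itself delicate) positivity $A(n)\ge 0$ for $q\equiv 3\bmod 4$ does not by itself close the argument. A second summation by parts only expresses everything through the doubly-summed character $B(k)=\sum_{j\le k}A(j)=(k+1)S_q(k+1)$, which is manifestly circular; this confirms that no purely soft monotonicity of the partial sums can suffice.

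A second attempt is Fourier-analytic. Using $\tau(\legendre{\cdot}{q})=i\sqrt q$ (valid since $q\equiv 3\bmod 4$) and pairing $a$ with $q-a$ via $\legendre{q-a}{q}=-\legendre aq$, one obtains the finite expansion
$$S_q(N)=\frac{2}{\sqrt q}\sum_{1\le a\le (q-1)/2}\legendre aq\,\Phi_N\!\Big(\frac aq\Big),\qquad \Phi_N(\theta)=\sum_{0<k<N}\Big(1-\frac kN\Big)\sin 2\pi k\theta,$$
where $\Phi_N$ is the conjugate Fejér kernel. I would then try to bound $S_q(N)$ by $h(q)=S_q(\tfrac q2)$ by comparing the kernels $\Phi_N$ and $\Phi_{q/2}$ at the arithmetic points $a/q$, exploiting the explicit closed form of $\Phi_N$ together with cancellation in the $\legendre{\cdot}{q}$-twisted sum. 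Equivalently, one can run the whole computation through a Mellin transform of $L(s,\legendre{\cdot}{q})$ against the triangular kernel, shift the contour to the pole at $s=1$ to produce the main term $h(q)$, and hope to show that the remaining integral is nonpositive.

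The main obstacle, and where I expect the real work to lie, is that the right-hand side $h(q)$ can be as small as $1$ -- for instance $q=163$, where $h(163)=1$ -- so the inequality demands that a Cesàro mean of the $\pm1$ sequence $\legendre kq$ never exceed an $O(1)$ quantity. No generic size bound can deliver this: Pólya--Vinogradov yields only $|S_q(N)|\ll\sqrt q\log q$, larger than $h(q)$ by many orders of magnitude. Worse, $S_q$ is genuinely non-monotone: for non-integer $M$ one has $S_q'(M)=M^{-2}\sum_{0<k<M}k\legendre kq$, and this derivative changes sign (already for $q=11$ the inner sum is negative on $(2,3)$), so $S_q$ has interior local maxima and the claim is that \emph{every} one of them lies below the endpoint value $h(q)$. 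Controlling all of these simultaneously appears to require the exact arithmetic of how quadratic residues distribute in initial segments -- information of essentially the same depth that drives the class number formula itself -- rather than any analytic or probabilistic estimate.
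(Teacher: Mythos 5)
You have not proved the statement, and you should know that the paper does not prove it either: this is Conjecture 1, supported only by J.~Bober's numerical verification for primes $q\equiv 3\bmod 8$ up to $10^9$, and in its concluding section the paper explicitly says that probabilistic grounds ``call into question'' its truth for all such $q$. So the honest assessment is that no proof is known and one may not exist; for the application to RH the paper only needs the weaker assertion that $f_q\geqslant 0$ for a family of $q$ whose characters imitate $\lambda(n)$ up to a height $N_q\to\infty$.

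That said, your reformulation is correct and is essentially the one the paper itself works with: Corollary 1 does reduce the conjecture to $S_q(N)\leqslant S_q(q/2)=h(q)$ for all real $0<N\leqslant q/2$, which is (up to the range of $N$) the paper's inequality $\max_N\sum_{n\leqslant N}\chi_q(n)(1-\tfrac nN)\leqslant \frac{\sqrt q}{\pi}L(1,\chi_q)$. Your tent-weight identity, the conjugate-Fej\'er expansion via $\tau(\chi_q)=i\sqrt q$, and the observation that $\frac{d}{dN}S_q(N)=N^{-2}\sum_{k<N}k\chi_q(k)$ changes sign all check out, and your diagnosis of the obstruction --- $h(q)$ can equal $1$ (e.g.\ $q=163$) while P\'olya--Vinogradov gives only $\sqrt q\log q$ --- matches the paper's remark that both sides of the inequality can be as large as $\sqrt q\log\log q$. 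But none of your three routes (partial summation, kernel comparison, contour shift past $s=1$) is carried to a conclusion, and each fails for the reason you yourself identify; what you have is a correct restatement plus a catalogue of dead ends, not a proof. Given the paper's own doubts, a more productive next step might be a counterexample search (Bober's check covers only prime $q$, not squarefree composite $q\equiv 3\bmod 8$) or an attack on the weaker, RH-sufficient range $N\leqslant q/4$.
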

\begin{remark} J.\ Bober has  checked that this inequality is true for all primes $q\equiv 3 \bmod 8$ up to $10^9$.
\end{remark}

Now we turn to the proofs.

\section{Useful Lemmas}
\begin{lemma}  For $y>0$ we have
$$\frac{1}{2\pi i}\int_{(c)} \frac{X(1-s) y^{1-s}}{1-s} ~ds =\frac{\sin 2 \pi y}{\pi}$$
for any $c$ satisfying $0<c< 1$ where $(c)$ denotes the path from $c-i\infty$ to $c+i\infty$
\end{lemma}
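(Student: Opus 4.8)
The plan is to read the left-hand side as an inverse Mellin transform and to match it against the classical Mellin transform of the sine function. Everything hinges on one algebraic simplification followed by a change of variable that turns the functional-equation factor into the familiar $\Gamma(t)\sin\tfrac{\pi t}{2}$.

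First I would insert the explicit shape of the factor, $X(1-s)=2(2\pi)^{-s}\Gamma(s)\cos\tfrac{\pi s}{2}$, and simplify the quotient $\Gamma(s)/(1-s)$. Since $\Gamma(s)=(s-1)\Gamma(s-1)$, we have $\Gamma(s)/(1-s)=-\Gamma(s-1)$, so the integrand becomes $-2(2\pi)^{-s}\Gamma(s-1)\cos\tfrac{\pi s}{2}\,y^{1-s}$. Next I would substitute $s=t+1$, which carries the line $\Re s=c$ to the line $\Re t=c-1$, and note that $c\in(0,1)$ forces $c-1\in(-1,0)$. Using $\cos\tfrac{\pi(t+1)}{2}=-\sin\tfrac{\pi t}{2}$ together with $2(2\pi)^{-(t+1)}=\tfrac1\pi(2\pi)^{-t}$, the integrand collapses to
$$\frac{1}{\pi}(2\pi)^{-t}\,\Gamma(t)\,\sin\tfrac{\pi t}{2}\;y^{-t},$$
so that the original integral equals $\tfrac1{2\pi i}\int_{(c-1)}\tfrac1\pi(2\pi)^{-t}\Gamma(t)\sin\tfrac{\pi t}{2}\,y^{-t}\,dt$.

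To finish I would invoke the classical Mellin pair $\int_0^\infty \sin u\,u^{t-1}\,du=\Gamma(t)\sin\tfrac{\pi t}{2}$; rescaling $u=2\pi y$ shows that $\tfrac1\pi(2\pi)^{-t}\Gamma(t)\sin\tfrac{\pi t}{2}$ is exactly the Mellin transform of $g(y)=\tfrac{\sin 2\pi y}{\pi}$. Mellin inversion then returns $g(y)$ itself, which is the assertion. The independence of the value from the choice of $c$ is automatic: the $t$-integrand is holomorphic throughout $-1<\Re t<1$, because the pole of $\Gamma(t)$ at $t=0$ is cancelled by the zero of $\sin\tfrac{\pi t}{2}$, so Cauchy's theorem lets the contour slide freely within this strip.

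The main obstacle is analytic rather than algebraic: the Mellin transform of $\sin$ converges only conditionally, with absolute convergence restricted to the strip $-1<\Re t<0$, which is precisely where the shifted contour $\Re t=c-1$ lies. I would therefore have to justify the inversion and the convergence of the vertical integral with some care, for instance by integrating by parts in $\Im t$ and exploiting that the exponential decay of $|\Gamma(t)|$ exactly offsets the exponential growth of $|\sin\tfrac{\pi t}{2}|$, leaving only polynomial growth that the oscillation of $y^{-t}$ controls. As a cross-check one could instead close the contour to the left and sum the residues at $t=-1,-3,-5,\dots$, recovering the Taylor series of $\tfrac{\sin 2\pi y}{\pi}$ term by term; that route reaches the same conclusion but demands the identical decay estimates to discard the large semicircles. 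Once this standard bookkeeping is in place, the identity follows.
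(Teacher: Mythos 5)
Your argument is correct, and your algebra checks out: $\Gamma(s)/(1-s)=-\Gamma(s-1)$, the shift $s=t+1$ with $\cos\tfrac{\pi(t+1)}{2}=-\sin\tfrac{\pi t}{2}$ does reduce the integrand to $\tfrac{1}{\pi}(2\pi)^{-t}\Gamma(t)\sin\tfrac{\pi t}{2}\,y^{-t}$ on the line $\Re t=c-1\in(-1,0)$, which is exactly the strip of absolute convergence of $\int_0^\infty \sin u\,u^{t-1}\,du$, so Mellin inversion applies. The paper's displayed proof takes the complementary route: it identifies the poles of the integrand at $s=0,-2,-4,\dots$ (in your variable, $t=-1,-3,-5,\dots$), computes the residue at $s=-2n$ to be $\tfrac{1}{\pi}\tfrac{(-1)^n(2\pi y)^{2n+1}}{(2n+1)!}$, and sums the resulting series to recover $\tfrac{\sin 2\pi y}{\pi}$ term by term --- precisely the ``cross-check'' you mention at the end --- while relegating the Mellin-transform identity to a citation of Titchmarsh's formula (7.9.5). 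So the two writeups cover the same ground with the emphasis reversed: the residue route buys an explicit, self-contained verification that sidesteps quoting the sine transform, whereas your inversion route makes the convergence bookkeeping more transparent, since the contour sits in the strip where the forward transform converges absolutely and only the inverse integral needs the conditional-convergence argument. Both routes must ultimately confront the same estimate, namely that $|\Gamma(t)\sin\tfrac{\pi t}{2}|\asymp|\Im t|^{\Re t-1/2}$, which you correctly flag (and which is why the paper later restricts to $0<c<1/2$ for absolute convergence); your one overstatement is that the contour slides ``freely'' in all of $-1<\Re t<1$, since for $\Re t>1/2$ the vertical integral no longer converges even conditionally --- but you only ever need to move within $(-1,0)$, so this does not affect the proof.
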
 
The integrand has simple poles at $s=0,-2,-4,\dots$
with the residue at $s=-2n$ equal to 
$$\frac{1}{\pi} \frac{(-1)^n (2 \pi y)^{2n+1}}{(2n+1)!}.$$
Summing these leads to the desired formula.
See also [T]; the above is the integral of the formula (7.9.5).

\begin{lemma} If $c>0$ and $\Re a>0$, then 
\begin{eqnarray*}\frac{1}{2\pi i} \int_{(c)} \frac{\Gamma(s)\Gamma(a)}{\Gamma(s+a)} x^{-s}~ds=\left\{ \begin{array}{ll} (1-x)^{a-1} & \text{if $0<x<1$}\\
0 & \text{if $x\geqslant 1$} \end{array}  \right.
\end{eqnarray*}
 
\end{lemma}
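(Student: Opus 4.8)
The plan is to recognize the quotient of Gamma factors as the Euler Beta function and to read the asserted identity as a Mellin inversion. For $\Re s>0$ and $\Re a>0$ one has
$$\frac{\Gamma(s)\Gamma(a)}{\Gamma(s+a)}=B(s,a)=\int_0^1 t^{s-1}(1-t)^{a-1}\,dt,$$
which is exactly the Mellin transform of the function $\phi(t)=(1-t)^{a-1}$ on $0<t<1$, extended by $0$ for $t\geqslant 1$. Thus the formula is precisely the statement that Mellin inversion recovers $\phi$, and the proof reduces to shifting the contour $(c)$ and controlling the tails, in the same spirit as the residue computation used for Lemma 1.

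I would split into two cases, according to whether $x$ lies outside or inside the interval of convergence. For $x>1$ I would move the line of integration to the right, to $\Re s=M$. Since every pole of the integrand comes from $\Gamma(s)$ and sits at $s=0,-1,-2,\dots$, no poles are crossed, so the integral over $(c)$ equals the integral over $(M)$; as $|x^{-s}|=x^{-M}\to 0$, letting $M\to\infty$ forces the value $0$. For $0<x<1$ I would instead push the contour to the left, past the poles at $s=-n$. The residue of $\Gamma(s)$ at $s=-n$ is $(-1)^n/n!$, and since $\Gamma(a)/\Gamma(a-n)=(a-1)(a-2)\cdots(a-n)$ the residue of the full integrand at $s=-n$ is
$$\frac{(-1)^n}{n!}\,(a-1)(a-2)\cdots(a-n)\,x^{n}=(-1)^n\binom{a-1}{n}x^{n}.$$
Summing over $n\geqslant 0$ and invoking the generalized binomial theorem gives $\sum_{n\geqslant 0}\binom{a-1}{n}(-x)^n=(1-x)^{a-1}$, which is the claimed value, the series converging precisely because $0<x<1$.

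The main obstacle is the tail control needed to make the two contour shifts rigorous. In both directions one must check that the horizontal connecting segments contribute nothing in the limit and that the shifted vertical integral vanishes; this rests on the Stirling estimate $|\Gamma(s)/\Gamma(s+a)|\ll|s|^{-\Re a}$, uniform as $|\Im s|\to\infty$ in a fixed strip, which kills the horizontal pieces when $\Re a>0$, while the geometric factor ($x^{-M}$ on the right, $x^{N+1/2}$ on the left) kills the shifted line. A genuine subtlety appears when $0<\Re a\leqslant 1$, where the integral along $(c)$ is only conditionally convergent, so the identity must be read as the symmetric limit $\lim_{T\to\infty}\int_{c-iT}^{c+iT}$; and on the left shift one should run the contours along the half-integer lines $\Re s=-N-\tfrac12$, using the reflection formula to bound $\Gamma(s)$ near the negative real axis, where the plain Stirling asymptotic degenerates. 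Finally, the boundary point $x=1$ deserves a remark: the inversion returns the average of the one-sided limits, which equals $0$ when $\Re a>1$ (so the stated value is exact, as in our application), and the discrepancy in the degenerate cases $\Re a\leqslant 1$ affects only this single point and is harmless.
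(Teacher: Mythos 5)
Your argument is correct, but it is genuinely different from what the paper does: the paper offers no proof at all for this lemma, simply citing it as formula (7.7.14) of Titchmarsh's \emph{Introduction to the Theory of Fourier Integrals}. What you have written is a self-contained derivation: you identify $\Gamma(s)\Gamma(a)/\Gamma(s+a)$ as the Beta integral $\int_0^1 t^{s-1}(1-t)^{a-1}\,dt$, i.e.\ the Mellin transform of $(1-t)^{a-1}$ truncated to $(0,1)$, and then justify the inversion by shifting the contour right (no poles, $x^{-M}\to 0$) for $x>1$ and left (collecting the residues $(-1)^n\binom{a-1}{n}x^n$ of $\Gamma$ at $s=-n$, resummed by the binomial series) for $0<x<1$. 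The residue computation, the use of half-integer lines to avoid the poles of $\Gamma$, the Stirling bound $|\Gamma(s)/\Gamma(s+a)|\ll|s|^{-\Re a}$, and the caveat about conditional convergence when $0<\Re a\leqslant 1$ are all accurate; note that in the one place the paper applies this lemma one has $a=5/2$, so the absolutely convergent case suffices there. This left-shift-and-sum-residues technique is also exactly the mechanism the paper sketches for Lemma 1, so your proof has the added virtue of unifying the two lemmas under one method, at the cost of being longer than the paper's one-line citation.
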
 

This formula is (7.7.14) of [T].
 
\begin{lemma}  If $c>0$, then
\begin{eqnarray*}\frac{1}{2\pi i} \int_{(c)} \frac{x^s}{s(s+1)}~ds=\left\{\begin{array}{ll} 1-\tfrac 1 x  &\text{if $x>1$}\\0& \text{if $0<x\leqslant 1$}
\end{array} \right.
\end{eqnarray*}
\end{lemma}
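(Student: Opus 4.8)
The plan is to evaluate the integral directly by Cauchy's residue theorem, shifting the contour $(c)$ off to one side. First I would record that the integrand
$$g(s)=\frac{x^{s}}{s(s+1)}$$
is meromorphic on all of $\mathbb{C}$ with only two poles, both simple, at $s=0$ and $s=-1$; since $c>0$ both lie strictly to the left of the line of integration. The essential analytic input is that the rational factor satisfies $1/|s(s+1)|=O(|s|^{-2})$, which at once guarantees absolute convergence of the integral along $(c)$ and, more importantly, lets me kill large circular arcs: on a semicircle of radius $R$ the integrand is $O(R^{-2})$ while the arc has length $O(R)$, so the arc contributes $O(R^{-1})\to 0$ provided $|x^{s}|$ stays bounded there.

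The boundedness of $|x^{s}|=x^{\Re s}$ is what dictates the direction in which to close, and hence produces the case split. For $x>1$ I would close to the left, where $\Re s\le c$ forces $x^{\Re s}\le x^{c}$; the closed contour is positively oriented and encircles both poles, so the integral equals $\operatorname{Res}_{s=0}g+\operatorname{Res}_{s=-1}g$. A one-line computation gives these residues as $1$ and $-1/x$, yielding $1-1/x$. For $0<x<1$ I would instead close to the right, where $\Re s\ge c$ gives $x^{\Re s}\le x^{c}\le 1$; since $g$ is holomorphic in the right half-plane, the integral is $0$.

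The only point that needs a little care, and the step I would flag as the main obstacle, is making the arc estimates uniform near the imaginary axis, where $x^{\pm s}$ provides no decay and the vanishing of the arc rests entirely on the $O(|s|^{-2})$ decay of $1/(s(s+1))$; this is exactly why a first-order factor like $1/s$ alone would force a more delicate Jordan's-lemma argument, whereas here absolute convergence does the work. The boundary value $x=1$ is then painless: closing either way is legitimate, and the two answers agree, since $1-1/x$ vanishes at $x=1$. As a sanity check one may instead use partial fractions, $1/(s(s+1))=1/s-1/(s+1)$, reducing the claim to the standard Perron step integral and its shift by one; the half-values at $x=1$ cancel, recovering $0$ and confirming continuity of the answer across $x=1$.
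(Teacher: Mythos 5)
Your argument is correct and is exactly the standard verification the paper has in mind (the paper offers no written proof, remarking only that the lemma is ``well-known and easy to verify''): the residues at $s=0$ and $s=-1$ are indeed $1$ and $-1/x$, the $O(|s|^{-2})$ decay of $1/(s(s+1))$ justifies closing the contour without any Jordan's-lemma subtlety, and the direction of closure is correctly dictated by the sign of $\log x$, with the boundary case $x=1$ handled consistently. Nothing further is needed.
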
 
This lemma is well-known and is easy to verify.

\section{Proofs of theorems}

{\it Proof of Theorem 1.} This assertion is a consequence of Landau's Theorem: ``If $g(n)\geqslant 0$ then the right-most singularity of 
$\sum_{n=1}^\infty g(n) n^{-s}$ is real.''  This is theorem 10 of [HR] and Theorem 1.7 of [MV1]. What we actually need is an integral version of this theorem:
``If $g(x)\geqslant 0$ then the right-most singularity of 
$\int_{1}^\infty g(x) x^{-s}~dx$ is real.''  The proof of this version  is essentially the same as that of the first version (see Lemma 15.1 of [MV1]).  The application to our situation is slightly subtle.
We argue as follows.
Since 
$$\sum_{n=1}^\infty\lambda(n)n^{-s}=\frac{\zeta(2s)}{\zeta(s)}$$ it follows from  Lemma 1 that
$$\frac{f(x)}{\pi}= \frac{1}{2\pi i} \int_{(c)} \frac{X(1-s)}{1-s} \frac{\zeta(2s+2)}{\zeta(s+1)} x^{1-s} ~ds$$
where $0<c<1$.  The integral is absolutely convergent for $0<c<1/2$.
By Mellin inversion we have
$$\frac{\pi X(1-s)}{1-s} \frac{\zeta(2s+2)}{\zeta(s+1)} =\int_0^\infty f(x) x^{s-2} ~dx.$$ We split the integral into two integrals at $x=4$ so that 
$$ \frac{\pi X(1-s)}{1-s} \frac{\zeta(2s+2)}{\zeta(s+1)} =\int_0^4  f(x) x^{s-2} ~dx + \int_4^\infty  f(x) x^{s-2} ~dx =I_1(s)+I_2(s),
$$ say.  The integral defining $I_1(s)$ is absolutely convergent for $\sigma >1$ and the second integral is absolutely convergent for $\sigma<1$.
Using the periodicity of $f$ we can show that the second integral converges for $\sigma<2$. Indeed, let
$$F(x)=\int_0^x f(t)~dt.$$
Then $F(n)=0$ for all integers $n$ and $F$ is bounded. Therefore,
\begin{eqnarray*}I_2(s)&=&\sum_{n=4}^\infty \int_n^{n+1} f(x) x^{s-2}~dx=\sum_{n=4}^\infty \left(\left.F(x)x^{s-2}\right|_{x=n}^{x=n+1}-(s-2)\int_n^{n+1}F(x)x^{s-3}~dx \right)\\
&=&  -(s-2)\int_4^\infty F(x)x^{s-3}~dx .
\end{eqnarray*}
This integral converges for $\Re s <2$.  So, we now have $I_2$ analytic for $\Re s <2$. Clearly,  $I_1+I_2$ is analytic for $\Re s > \max\{-1/2,\rho-1\}$ i.e. for $\Re s> 0$. (The pole of $X(1-s)$ at $s=0$ is canceled by the zero of $1/\zeta(s+1)$ at $s=0$.)
It follows that  $I_1(s)=(I_1(s)+I_2(s))-I_2(s)$ is analytic for $\Re s >0 $.    It follows that $I_2(s)$ is also analytic for $\Re s>0$, and since we already knew it was analytic for $\Re s<2$ it follows that $I_2(s)$ is entire. Now, we can write $I_1$ as 
$$I_1(s)= \int_{1/4}^\infty f(1/x)  x^{-s} ~dx. 
$$
Recall we have assumed that $f(1/x) \geqslant 0$  for $x\geqslant 4$.  Therefore, by Landau's theorem, the rightmost singularity of $I_1(s)$ is real.  Since $I_2$ is entire, it follows that the rightmost pole of $I_1(s)+I_2(s)$ must also be real. But the rightmost real pole of 
$$I_1(s)+I_2(s)= \frac{\pi X(1-s)}{1-s} \frac{\zeta(2s+2)}{\zeta(s+1)}$$
is at $s=-1/2$. This must be the rightmost pole. Therefore the poles at $\rho-1$ must all have their real parts less than or equal to -1/2. In particular,
$\Re \rho \leqslant 1/2$, which is RH.

 {\it Proof of Theorem 2.} 
We start again from 
$$
\frac{f(x)}{\pi}=\frac{1}{2\pi i}\int_{(c)}\frac{X(1-s)\zeta(2 s+2)x^{1-s}}{(1-s)\zeta(s+1)}~ds$$
where $0<c < 1/2$.
The integrand has poles only at $s=-\tfrac 1 2$ and at $s=\rho-1$ where $\rho$ is a complex  zero of $\zeta(s)$ and nowhere else in the $s$-plane.  The residue at 
$s=-\tfrac 12$ is 
$$\frac{X(\tfrac 32)}{\tfrac 32 \zeta(\tfrac12)}x^{ 3/2}=
-\frac{4 \pi  }{3 \zeta(\tfrac 12)}x^{3/2}.
$$
Assuming that the zeros are simple, the residue at $s=\rho-1$ is 
$$ \frac{X(2-\rho)\zeta(2\rho)x^{2-\rho}}{(2-\rho)\zeta'(\rho)}.$$
We (carefully) move the path of integration to $(c)$ where $-2<c<-1$. 
To do this we have to cross through a field of poles arising from the zeros of the zeta function.
To do this we use Theorem 14.16 of [T1] (see also [R])
to find a path on which   $1/\zeta(s+1)\ll T^\epsilon$ where we can safely cross.  Using the bounds  $|X(1-s)|\ll T^{\sigma-1/2}$ and $\zeta(2s+2)\ll T^{-\frac 1 2-\sigma}$
 we can get the sum of the residues arising from the zeros up to height $T$ together with an error term that tends to 0 as $T\to \infty$.
Thus, assuming the zeros are simple,
$$\frac{f(x)}{\pi}=-\frac{4 \pi x^{3/2}}{3 \zeta(1/2)} +\sum_{\rho}\frac{X(2-\rho)\zeta(2\rho)x^{2-\rho}}{(2-\rho)\zeta'(\rho)}+\frac{1}{2\pi i}\int_{(c)}
\frac{X(1-s)\zeta(2 s+2)x^{1-s}}{(1-s)\zeta(s+1)}~ds.
$$
If the zeros are not simple we modify the sum over zeros appropriately.
We make the change of variable $s\to -s$ in the integral. Using the functional equation for the $\zeta$-function and functional relations for the $\Gamma$-function, we see that the new integrand is
$$ \frac{X(1+s)\zeta(2-2 s)x^{1+s}}{(1+s)\zeta(1-s)}=-\pi^{3/2}2^{2s}\frac{\Gamma(s-\tfrac{1}{2})}{\Gamma(s+2)}\frac{\zeta(2s-1)}{\zeta(s)} x^{1+s}.
$$
By Lemma 2,
$$\frac{1}{2\pi i}\int_{(c)} \pi^{3/2}2^{2s}\frac{\Gamma(s-\tfrac{1}{2})}{\Gamma(s+2)}\frac{\zeta(2s-1)}{\zeta(s)} x^{1+s}
=\frac{8 \pi}{3}x^{3/2}\sum_{n\leqslant 4x} \frac{\ell(n)}{\sqrt{n}}\bigg(1-\frac{n}{4x}\bigg)^{3/2}.$$
Then Theorem 2 follows.
 
 {\it Proof of Theorem 4.}
 We denote $\chi_q(n)=\legendre{n}{q}$.   By Lemma 1,
\begin{eqnarray} \label{eqn:fq} f_{q}(x)=\frac{\pi}{2\pi i} \int_{(c)} L(s+1,\chi_q)X(1-s) x^{1-s} ~\frac{ds}{1-s}\end{eqnarray}
where $0<c<1$.
Since $\chi_q$ is odd, we find that the integrand has a pole at $s=0$ and nowhere else in the complex plane.
We move the path of integration to $(c)$ where $c<-1$ to see that
$$ f_{q}(x)=2\pi xL(1,\chi_q)+\frac{\pi}{2\pi i} \int_{(c)} L(s+1,\chi_q)X(1-s) x^{1-s} ~\frac{ds}{1-s}.$$
Now let $s\to -s$ in the integral and use the functional equation (see [D], [IK] or [MV1])
$$L(1-s,\chi_q)=2 q^{s-\tfrac 12}(2\pi)^{-s}\Gamma(s)\sin\tfrac {\pi s}{2}L(s,\chi_q).$$
After simplification, the integral above is
$$\frac{-2\pi^2 }{2\pi i} \int_{(c)}q^{s-\tfrac12}x^{1+s}L(s,\chi_q) ~\frac{ds}{s(s+1)}.$$
By Lemma 3, this integral is
$$ \frac{-2\pi^2 x}{\sqrt{q}}\sum_{n\leqslant xq}\chi_q(n)\bigg(1-\frac{n}{xq}\bigg).$$
The proof of  Theorem 4 is complete.
 
 \begin{remark}
 Note that the non-negativity, for $0< x < 1/4$,  of the right-hand side of (\ref{eqn:fq}) implies the Riemann Hypothesis. This  condition 
 only involves Dirichlet L-functions with quadratic characters. Thus, information solely about Dirichlet L-functions 
 potentially gives the Riemann Hypothesis. This example   shows that different L-functions somehow know about each other. 
 \end{remark}
 
 \section{Further remarks}
 
 Since 
$$h(q)\gg_\epsilon q^{1/2-\epsilon}$$
we see that 
$$f_q(x)\geqslant 0$$
for $a\ll x \ll q^{-1/2-\epsilon}$. In particular,
$$f_q(a/q) \geqslant 0$$
for $a\ll q^{1/2-\epsilon}$. But this doesn't give information about $f(x)$.
 
Also, the Polya-Vinogradov inequality tells us that
$$\max_{N}\big|\sum_{n=1}^N \chi_q(n)\big|\ll q^{1/2}\log q$$
and the work of Montgomery and Vaughan [MV]
shows that the Riemann Hypothesis for $L(s,\chi)$ implies that
$$\max_{N}\big|\sum_{n=1}^N \chi_q(n)\big|\ll q^{1/2}\log \log q.$$
Moreover, it is known that the right hand side here can not be replaced by any function that goes to infinity slower.  It is also known, assuming the Riemann Hypothesis for $L(s,\chi)$, that
$$L(1,\chi)\ll \log \log q.$$
Our desired inequality can be expressed in terms of $L(1,\chi)$ as 
\begin{eqnarray} \label{eqn:inequality}  \max_{N \leqslant \tfrac q4}\sum_{n=1}^N  \chi(n)\big(1-\frac nN\big)\leqslant \frac{\sqrt{q}}{\pi}L(1,\chi).\end{eqnarray}
It appears that both sides of this inequality can be as big as 
$\sqrt{q}\log \log q.$

 A question is whether the converse of Theorem 1 is true.  
 It might be possible to approach this by showing that the ``$\frac 3 2$'' derivative of $f(x)$ is positive at $x=0$ so that there is a small interval 
 to the right of  $0$ for which $f(x)\ge 0$. This method, or trying to prove (\ref{eqn:ineq}) directly, would involve explicit estimates (assuming RH) for $1/\zeta(s)$ in the critical strip;
 see [MV1] section  13.2 for a good approach to such explicit estimates.
 
 Finally, we mention that $f(x)$ can be evaluated at a rational 
 number  $x=a/q$ as an average involving Dirichlet L-functions $L(s,\chi)$ where $\chi$ is a character modulo $q$.

 \section{Evaluation of $f_q(a/p)$.}
 Let $p<q$ and $(a,p)=1$. We  explicitly   evaluate $f_q(a/p)$ as a sum over characters modulo $p$ as follows. We have
 \begin{eqnarray*}
 f_q(a/p)&=&\sum_{n=1}^\infty \frac {\chi_q(n) \sin \frac{2\pi a n}{p}}{n^2}
\\
&=&  \sum_{n=1}^\infty \frac {\chi_q(n) }{n^2}\frac{1}{\phi(p)} \Im \left\{\sum_{\psi\bmod p}\tau(\psi)  \overline{\psi}(an)\right\}\\
&=&  \frac{1}{\phi(p)} \Im \left\{\sum_{\psi\bmod p}\tau(\psi)  \overline{\psi}(a) 
 \sum_{n=1}^\infty \frac {\chi_q(n)\psi(n) }{n^2} \right\} \\
 &=& 
  \frac{1}{\phi(p)} \Im \left\{\sum_{\psi\bmod p}\tau(\psi)  \overline{\psi}(a)  L(2,\chi_q\overline{\psi} )\right\} .
  \end{eqnarray*}
 Now, if $\psi $ is even, then
 \begin{eqnarray*}\overline{\tau(\psi)} &=& \sum_{n=1}^p \overline{\psi(n)} e(-an/p)= \sum_{n=1}^p \overline{\psi}(-n)e(an/p) \\
 &=&  \sum_{n=1}^p \overline{\psi}(n)e(an/p)=\tau(\overline{\psi})
 \end{eqnarray*}
while if $\psi $ is odd then
 \begin{eqnarray*}\overline{\tau(\psi)}=-\tau(\overline{\psi}). \end{eqnarray*}
Thus, for even $\psi$ 
\begin{eqnarray*}
\Im \left\{ \tau(\psi)  \overline{\psi}(a)  L(2,\chi_q\overline{\psi} )+\tau(\overline{\psi})  \psi(a)  L(2,\chi_q{\psi} )
\right\} =0
\end{eqnarray*}
and for odd $\psi$
\begin{eqnarray*}
\Im \left\{ \tau(\psi)  \overline{\psi}(a)  L(2,\chi_q\overline{\psi} )+\tau(\overline{\psi})  \psi(a)  L(2,\chi_q{\psi} )
\right\} = 2 \Im\{\tau(\psi)  \overline{\psi}(a)  L(2,\chi_q\overline{\psi} )\}.
\end{eqnarray*}
Therefore,  using the fact that $\tau(\chi_p)=i\sqrt{p}$ when $p\equiv 3 \bmod 4$, we have 
\begin{eqnarray*} f_q(a/p)=\frac{1}{\phi(p)}\sum_{{\psi \bmod p\atop \psi(-1)=-1}\atop \psi^2\ne \psi_0} \Im\{ \tau(\psi)  \overline{\psi}(a)  L(2,\chi_q\overline{\psi} )\}
+\delta(p\equiv 3 \bmod 4)\frac{\sqrt{p}}{\phi(p)}\Re \{\overline{\psi}(a)L(2,\chi_q\overline{\psi})\}.
\end{eqnarray*}
We use this to prove that 
$$f_q(1/3)>0 \qquad \mbox{and} \quad f_q(1/5)>0$$
for any $q$. By the formula above we have 
\begin{eqnarray*}
f_q(1/3)=\frac{ \sqrt{3} }{2}L(2,\chi_q \chi_3) >0
\end{eqnarray*}
and 
\begin{eqnarray*}
f_q(1/5) &=& \frac{2}{\phi(5)} \Im\{(-1.17557 + 1.90211 i) L(2,\chi_q\psi_1)\}\\
&=& 1.9 \alpha -1.17 \beta
\end{eqnarray*}
where $\psi_1=\{1,i,-i,-1,0\}$ with $\tau(\psi_1)=-1.17557 + 1.90211 i$
and
  $$\alpha+i\beta =L(2,\chi_q\psi_1) = 1 + \frac{\chi_q(2) i}{2^2}-\frac{\chi_q(3)i}{3^2}-\frac{\chi_q(4)}{4^2}+\dots.$$
Now
 \begin{eqnarray*}\alpha \ge 1-\frac{1}{4^2}-\frac{1}{5^2}-\dots= 0.716\dots
\end{eqnarray*}
 and 
 \begin{eqnarray*}
|\beta|< \frac{1}{2^2}+\frac{1}{3^2}+\dots =0.64\dots
 \end{eqnarray*}
 Thus,
 $$f_q(1/5)> 0.6.$$


A couple of formulas may help us move forward here.
One is that if $\theta_1$ and $\theta_2$ are characters with coprime moduli $m_1$ and $m_2$ respectively, then (see [IK, (3.16)])
$$\tau( \theta_1 \theta_2) = \theta_1(m_2) \theta_2(m_1) \tau(\theta_1)\tau(\theta_2).$$
The other is that 
$$L(1-r,\theta)=-\frac{m^{r-1}}{r}\sum_{b=1}^m \theta(b) B_r(b/m)$$
for a character $\theta$ modulo $m$ and a positive integer $r$ where $B_r$ is the $r$th Bernoulli polynomial
(see [Wa,Theorem 4.2]). 
Recall the functional equation (see [D]) for a primitive character $\theta \bmod m$:
$$L(1-s,\theta) = \left(\frac{m}{2\pi}\right)^s  \Gamma(s) (e^{\pi i s/2}+\theta(-1) e^{-\pi i s/2}) L(s,\overline{\theta})/\tau(\overline{\theta})$$
It follows that for an even $\theta=\chi_q \psi$, with $q\equiv 3\bmod 4$ and $\psi$ an odd character modulo $p$, 
\begin{eqnarray*}L(2,\chi \overline{\psi})&=&-\pi \left(\frac{pq}{2\pi}\right)^{-1}L(-1, \theta)/\tau( \theta)\\
&=& -\pi \left(\frac{pq}{2\pi}\right)^{-1}L(-1,\chi\psi)/(\chi(p)\psi(q) \tau(\psi)i\sqrt{q})\\
\end{eqnarray*}
Therefore,
\begin{eqnarray*}
\Im\{ \tau(\psi)\overline{\psi}(a)L(2,\chi_q\overline{\psi})\}&=&\Re\{\frac{2\pi^2\chi_q(p)\overline{\psi}(aq)}{pq^{3/2} }L(-1,\chi_q\psi)\}
\\&=& -\Re\{\frac{\pi^2 \chi_q(p)\overline{\psi}(aq)}{\sqrt{q}}\sum_{b=1}^{pq} \chi_q(b)\psi(b)B_2(b/(pq))\}
\end{eqnarray*}
We sum this equation over the odd characters modulo $p$ using
\begin{eqnarray*}
\sum_{\psi \bmod p\atop \psi(-1)=-1}   \psi(\frac{b}{aq})&=&\frac 12  \sum_{\psi \bmod p }( \psi(\frac{b}{aq})- \psi(-\frac{b}{aq})) 
\\
&=& \frac{\phi(p)}{2} \left\{ \begin{array}{rl} 1& \mbox{if $b\equiv aq \bmod p$}\\-1 &\mbox{if $b\equiv -aq\bmod p$} \end{array} \right. 
\end{eqnarray*}
This gives 
\begin{eqnarray*}&&
\sum_{\psi \bmod p\atop \psi(-1)=-1} \Im\{ \tau(\psi)\overline{\psi}(a)L(2,\chi_q\overline{\psi})\}\\ &&\qquad 
=- \frac{\phi(p)}{2}  \frac{\pi^2  \chi_q(p)}{\sqrt{q}}\left(\sum_{b\le pq\atop b\equiv aq\bmod p}\chi_q(a)B_2(b/(pq))-\sum_{b\le pq\atop b\equiv -aq\bmod p}\chi_q(a)B_2(b/(pq))\right) 
\end{eqnarray*}
Note that 
$$B_2(x)=x^2-x+1/6.
$$
Also,
$$\sum_{b\le pq\atop b\equiv aq\bmod p}\chi_q(b)-\sum_{b\le pq\atop b\equiv -aq\bmod p}\chi_q(b)=0$$
and
$$\sum_{b\le pq\atop b\equiv aq\bmod p}b \chi_q(b)-\sum_{b\le pq\atop b\equiv -aq\bmod p}b \chi_q(b)=0.$$
Thus,
\begin{eqnarray*}&&
\sum_{\psi \bmod p\atop \psi(-1)=-1} \Im\{ \tau(\psi)\overline{\psi}(a)L(2,\chi_q\overline{\psi})\} 
=-\frac{\pi^2\chi_q(p)}{2 p^2 q^{5/2}}   \bigg(\sum_{b\le pq\atop b\equiv a q\bmod p}b^2\chi_q(b) -\sum_{b\le pq\atop b\equiv -aq\bmod p}b^2\chi_q(b) \bigg) .
\end{eqnarray*}
 Thus, we have
 \begin{theorem} For primes $p$ and $q$ both congruent to 3 modulo 4 and $1\le a<p/2$ we have
 \begin{eqnarray*}
 f_q(a/p)=-\frac{  \pi^2\chi_q(p)}{2 p^2 q^{5/2}}  \bigg(\sum_{b\le pq\atop b\equiv a q\bmod p}b^2\chi_q(b) -\sum_{b\le pq\atop b\equiv -aq\bmod p}b^2\chi_q(b) \bigg) .
 \end{eqnarray*}
 \end{theorem}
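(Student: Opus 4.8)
The plan is to begin from the decomposition of $f_q(a/p)$ into a sum over Dirichlet characters modulo $p$ established above, and to fold the isolated quadratic-character term back into the main sum so that the starting point is
$$f_q(a/p)=\frac{1}{\phi(p)}\sum_{\psi\bmod p,\ \psi(-1)=-1}\Im\bigl\{\tau(\psi)\overline{\psi}(a)L(2,\chi_q\overline{\psi})\bigr\}.$$
This folding is legitimate because, for the odd quadratic character $\chi_p$ (which exists precisely because $p\equiv 3\bmod 4$), one has $\tau(\chi_p)=i\sqrt p$ and $L(2,\chi_q\chi_p)$ is real, so that $\Im\{\tau(\chi_p)\chi_p(a)L(2,\chi_q\chi_p)\}=\sqrt p\,\chi_p(a)\Re\{L(2,\chi_q\chi_p)\}$, reproducing exactly the separated $\delta(p\equiv 3\bmod 4)$ term. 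Since $p$ is prime, every odd $\psi$ is primitive, so $\theta=\chi_q\psi$ is an even primitive character modulo $pq$ uniformly across the sum, and the functional equation applies in the same form to every term.

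First I would apply the functional equation for the even primitive character $\theta=\chi_q\psi$, evaluated so as to convert $L(2,\chi_q\overline{\psi})$ into $L(-1,\chi_q\psi)$, and then substitute the Bernoulli evaluation $L(-1,\theta)=-\tfrac{pq}{2}\sum_{b}\theta(b)B_2\bigl(b/(pq)\bigr)$. Combining this with the Gauss-sum factorization $\tau(\chi_q\psi)=\chi_q(p)\psi(q)\tau(\chi_q)\tau(\psi)$ and the value $\tau(\chi_q)=i\sqrt q$, the factor $\tau(\psi)$ in the summand cancels and one is left with
$$\Im\bigl\{\tau(\psi)\overline{\psi}(a)L(2,\chi_q\overline{\psi})\bigr\}=-\Re\Bigl\{\frac{\pi^2\chi_q(p)\overline{\psi}(aq)}{\sqrt q}\sum_{b=1}^{pq}\chi_q(b)\psi(b)B_2\bigl(b/(pq)\bigr)\Bigr\}.$$
Summing over all odd $\psi$ and using the orthogonality relation that $\sum_{\psi\ \mathrm{odd}}\psi\bigl(b/(aq)\bigr)$ equals $\tfrac{\phi(p)}{2}$ when $b\equiv aq\pmod p$, equals $-\tfrac{\phi(p)}{2}$ when $b\equiv -aq\pmod p$, and vanishes otherwise, collapses the sum over $b\le pq$ onto the two progressions $b\equiv\pm aq\pmod p$, while the factor $\tfrac{\phi(p)}{2}$ cancels the $\phi(p)^{-1}$ in front and supplies the $\tfrac12$.

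It then remains to discard the lower-order terms of $B_2(x)=x^2-x+\tfrac16$. The constant piece vanishes because, as $b$ runs through either progression $b\equiv\pm aq\pmod p$ with $b\le pq$, the residue $b\bmod q$ runs through a complete system modulo $q$, whence $\sum\chi_q(b)=\sum_{j\bmod q}\chi_q(j)=0$; thus each progression sum is separately zero. The linear piece is handled by the reflection $b\mapsto pq-b$, which interchanges the two progressions and satisfies $\chi_q(pq-b)=\chi_q(-1)\chi_q(b)=-\chi_q(b)$ since $q\equiv 3\bmod 4$ forces $\chi_q(-1)=-1$; together with the vanishing of the constant piece this gives $\sum_{b\equiv -aq}b\,\chi_q(b)=\sum_{b\equiv aq}b\,\chi_q(b)$, so the linear contributions to the difference cancel. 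Only the $b^2/(pq)^2$ term survives, and combining the coefficient $-\pi^2\chi_q(p)/\sqrt q$ with $(pq)^{-2}$ and the $\tfrac12$ yields precisely $-\pi^2\chi_q(p)/(2p^2q^{5/2})$. I expect the main obstacle to be purely the disciplined bookkeeping of the Gauss-sum and functional-equation constants — in particular the factor $\chi_q(p)$, which emerges only from the factorization of $\tau(\chi_q\psi)$ together with $\overline{\psi}(aq)$, and the several powers of $2\pi$ — where a single stray sign or misplaced power derails the final coefficient; the two cancellation identities, although they are where the hypothesis $q\equiv 3\bmod 4$ is genuinely used, are by comparison straightforward and should be verified first.
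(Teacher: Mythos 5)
Your proposal is correct and follows essentially the same route as the paper: the character decomposition of $f_q(a/p)$, the functional equation converting $L(2,\chi_q\overline{\psi})$ to $L(-1,\chi_q\psi)$, the Bernoulli evaluation of $L(-1,\theta)$, the Gauss-sum factorization producing $\chi_q(p)\overline{\psi}(aq)$, orthogonality over odd $\psi$, and the cancellation of the constant and linear parts of $B_2$. Your explicit folding of the quadratic-character term back into the sum, and your justifications of the two cancellation identities (complete residue system mod $q$, and the reflection $b\mapsto pq-b$ using $\chi_q(-1)=-1$), merely spell out steps the paper asserts without proof.
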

 and
 \begin{corollary} If
 \begin{eqnarray} \label{eqn:RH}
 \rm{test}_a(p,q):=-\chi_q(p) \bigg(\sum_{b\le pq\atop b\equiv aq\bmod p}b^2\chi_q(b) -\sum_{b\le pq\atop b\equiv -aq\bmod p}b^2\chi_q(b) \bigg) >0
 \end{eqnarray}
 for all $p<q$ which are primes congruent to 3 modulo 8 and all $0<a<p/2$, then the Riemann Hypothesis follows.
 \end{corollary}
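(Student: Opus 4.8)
The plan is to read the hypothesis as a statement about the signs of the numbers $f_q(a/p)$, and then to recover the Riemann Hypothesis from Theorem 1 by the same imitation argument that powers Theorem 3. First I would record the elementary but crucial fact that the quantity $\mathrm{test}_a(p,q)$ is, up to a positive factor, simply $f_q(a/p)$. Indeed, Theorem 5 states that for primes $p<q$ with $p,q\equiv 3\bmod 4$ and $1\le a<p/2$,
$$f_q(a/p)=\frac{\pi^2}{2p^2q^{5/2}}\,\mathrm{test}_a(p,q),$$
and since $\pi^2/(2p^2q^{5/2})>0$ the inequality $\mathrm{test}_a(p,q)>0$ is equivalent to $f_q(a/p)>0$. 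Hence the hypothesis of the Corollary says precisely that $f_q(a/p)>0$ for all primes $p<q$ with $p,q\equiv 3\bmod 8$ and all $0<a<p/2$.

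I would then argue by contradiction. Suppose the Riemann Hypothesis fails. By Theorem 1, $f$ cannot be non-negative on $[0,1/4]$, so $f(x_0)<0$ for some $x_0$; since $f(0)=0$ and $f$ is continuous, we may take $x_0\in(0,1/4)$. Put $\delta=-f(x_0)/2>0$ and fix an open interval $U\ni x_0$ with $U\subset(0,1/2)$ on which $f<-\delta$. The admissible rationals $\{a/p:\ p\equiv 3\bmod 8\ \text{prime},\ 0<a<p/2\}$ are dense in $(0,1/2)$: given $x_0$ and a tolerance one chooses a large prime $p\equiv 3\bmod 8$ and takes $a$ to be the nearest integer to $px_0$, which lies in $(0,p/2)$ once $p$ is large because $x_0<1/2$. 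Thus I may fix one such point $a_0/p_0\in U$, at which $f(a_0/p_0)<-\delta$, with $p_0$ prime and $0<a_0<p_0$ (so $(a_0,p_0)=1$ automatically).

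The decisive step is to move this strict negativity from $f$ to $f_q$ at the very same point, for a suitable large modulus $q$. Exactly as in the proof of Theorem 3, choose $N>2/\delta$ and then a prime $q\equiv 3\bmod 8$ with $q>p_0$ such that $\chi_q(n)=\lambda(n)$ for all $n\le N$; such primes exist in abundance by the Chinese Remainder Theorem together with Dirichlet's theorem, the condition $q\equiv 3\bmod 8$ being exactly what forces $\chi_q(2)=-1=\lambda(2)$. Because the Fourier coefficients of $f-f_q$ are the numbers $\lambda(n)-\chi_q(n)\in\{-2,0,2\}$, which vanish for $n\le N$, we get
$$\bigl|f(a_0/p_0)-f_q(a_0/p_0)\bigr|\le\sum_{n>N}\frac{2}{n^2}<\frac{2}{N}<\delta,$$
so that $f_q(a_0/p_0)<f(a_0/p_0)+\delta<0$. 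Now $a_0,p_0,q$ satisfy all the hypotheses of Theorem 5 (in particular $p_0<q$), and the equivalence above forces $\mathrm{test}_{a_0}(p_0,q)<0$, contradicting the assumption. Therefore the Riemann Hypothesis holds.

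I expect the only genuinely delicate point to be the joint construction in the last paragraph: one must approximate $x_0$ well enough that $p_0$ is large, and still find a matching prime $q$ beyond $p_0$. Both are handled by the density of the admissible rationals and by the existence of arbitrarily large primes $q\equiv 3\bmod 8$ imitating $\lambda$ on $[1,N]$; the one thing worth verifying carefully is that the simultaneous congruence conditions ``$\chi_q(n)=\lambda(n)$ for $n\le N$'' and ``$q\equiv 3\bmod 8$'' cut out a residue class coprime to its modulus, so that Dirichlet's theorem indeed supplies infinitely many such $q$. This is precisely the fact already used (without full detail) in the paragraph preceding Theorem 3.
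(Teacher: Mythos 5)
Your proposal is correct and follows essentially the route the paper intends: Theorem 5 identifies $\mathrm{test}_a(p,q)$ with a positive multiple of $f_q(a/p)$, and then the contradiction is obtained by the same imitation argument used for Theorem 3 (choose $N$ with $2/N<\delta$, then a prime $q\equiv 3\bmod 8$ exceeding $p_0$ with $\chi_q(n)=\lambda(n)$ for $n\le N$), supplemented---correctly and necessarily---by the observation that the admissible rationals $a/p$ are dense, so a point of strict negativity of $f$ can be replaced by one of them before $q$ is chosen. The order of quantifiers you use (fix $a_0/p_0$ first, then take $q>p_0$) is exactly what is needed, since for a fixed $q$ the hypothesis only controls finitely many points.
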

 
 We note that by these techniques one can show
\begin{theorem}
 \begin{eqnarray*}
 f_q(a/q)=
 \frac{\pi^2}{2\sqrt{q}}\left(\chi_q(a)-\frac{1}{q^2}\sum_{c=1}^{q-1}c^2(\chi_q(c-a)-\chi_q(c+a))\right).
 \end{eqnarray*}
\end{theorem}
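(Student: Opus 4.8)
The plan is to evaluate $f_q(a/q)$ directly, bypassing the functional equation, by exploiting the finite Fourier (Gauss sum) expansion of $\chi_q$. Since $q\equiv 3\bmod 4$ is prime, $\chi_q$ is primitive and $\tau(\chi_q)=i\sqrt q$, so for every integer $n$
$$\chi_q(n)=\frac{1}{i\sqrt q}\sum_{c=1}^{q}\chi_q(c)\,e\!\left(\frac{cn}{q}\right).$$
First I would substitute this identity into the series defining $f_q(a/q)$, write $\sin\frac{2\pi an}{q}=\frac{1}{2i}\big(e(\tfrac{an}{q})-e(\tfrac{-an}{q})\big)$, and interchange the finite sum over $c$ with the absolutely convergent sum over $n$. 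This collapses $f_q(a/q)$ into the finite combination
$$f_q(a/q)=-\frac{1}{2\sqrt q}\sum_{c=1}^{q}\chi_q(c)\sum_{n=1}^\infty\frac{1}{n^2}\Big(e\big(\tfrac{(c+a)n}{q}\big)-e\big(\tfrac{(c-a)n}{q}\big)\Big),$$
each inner sum being a value of the dilogarithm at a root of unity.

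Next, since $f_q(a/q)$ is manifestly real, I would take real parts and invoke the classical Fourier expansion $\sum_{n\ge 1}\frac{\cos 2\pi n\beta}{n^2}=\pi^2 B_2(\{\beta\})$, valid for all real $\beta$, where $B_2(x)=x^2-x+\tfrac16$. This replaces each inner dilogarithm by an elementary Bernoulli value and yields
$$f_q(a/q)=-\frac{\pi^2}{2\sqrt q}\sum_{c=1}^{q}\chi_q(c)\Big(B_2\big(\{\tfrac{c+a}{q}\}\big)-B_2\big(\{\tfrac{c-a}{q}\}\big)\Big).$$
Writing $\{(c\pm a)/q\}=\langle c\pm a\rangle/q$ with $\langle\cdot\rangle$ the least nonnegative residue and expanding $B_2$, the constant terms drop out because $\sum_c\chi_q(c)=0$, and after the substitutions $b=\langle c\pm a\rangle$ the quadratic terms assemble into $\frac{1}{q^2}\sum_{c}c^2\big(\chi_q(c-a)-\chi_q(c+a)\big)$, which is exactly the sum appearing in the statement.

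The main obstacle is the bookkeeping in the linear terms, and this is where the whole $\chi_q(a)$ summand is born. Reindexing turns $\tfrac1q\big(\sum_b b\,\chi_q(b-a)-\sum_b b\,\chi_q(b+a)\big)$ into $\tfrac1q\sum_{d}\big(\langle d+a\rangle-\langle d-a\rangle\big)\chi_q(d)$, and the difference $\langle d+a\rangle-\langle d-a\rangle$ is not simply $2a$: it carries the wrap-around corrections $-q[\,d+a\ge q\,]-q[\,d<a\,]$. Keeping these corrections, and using $\chi_q(-1)=-1$ (which is precisely the hypothesis $q\equiv 3\bmod 4$) to fold one partial character sum into the other, the linear terms collapse to $q\,\chi_q(a)$ rather than to $0$. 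It is easy to lose this term by reindexing carelessly, so this is the step that demands the most care; once it is in hand the stated formula follows at once. One could instead mimic the derivation of the $f_q(a/p)$ formula, expanding $\sin\frac{2\pi an}{q}$ over the characters $\psi\bmod q$; but then the coincidence of moduli forces one to evaluate products $\tau(\psi)\tau(\chi_q\overline{\psi})$ as Jacobi sums and to treat the diagonal term $\psi=\chi_q$ (where $\chi_q\overline{\psi}$ degenerates to the principal character) separately, which is messier than the direct route above.
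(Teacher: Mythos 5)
Your argument is correct; I checked the sign conventions, the Gauss-sum inversion $\chi_q(n)=\tau(\chi_q)^{-1}\sum_c\chi_q(c)e(cn/q)$ with $\tau(\chi_q)=i\sqrt q$, the Fourier series $\sum_{n\ge1}n^{-2}\cos 2\pi n\beta=\pi^2B_2(\{\beta\})$, and in particular the delicate linear-term bookkeeping: with $\langle d+a\rangle-\langle d-a\rangle=2a-q[\,d+a\ge q\,]-q[\,d<a\,]$ and $\chi_q(-1)=-1$, the two boundary character sums telescope to the single value $\chi_q(a)$, exactly as you claim, and the quadratic terms reindex to the stated sum. The paper offers no written proof of this theorem, only the remark that it follows ``by these techniques,'' meaning the route used for $f_q(a/p)$: expand the additive character $e(an/p)$ over multiplicative characters $\psi\bmod p$, pass from $L(2,\chi_q\overline\psi)$ to $L(-1,\chi_q\psi)$ via the functional equation, and then invoke $L(-1,\theta)=-\tfrac m2\sum_b\theta(b)B_2(b/m)$. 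Your route is genuinely different and, for this particular theorem, better adapted: by expanding the multiplicative character $\chi_q$ over additive characters instead, you reach the Bernoulli values directly from the classical Fourier expansion of $B_2$, with no $L$-functions, no functional equation, and no need to handle the degenerate diagonal term $\psi=\chi_q$ (where $\chi_q\overline\psi$ becomes principal and imprimitivity intervenes) that the paper's method would force on you when the two moduli coincide. The trade-off is that your method is special to the evaluation point $a/q$ having denominator equal to the conductor, whereas the paper's machinery is what produces the companion formula for $f_q(a/p)$ with $p\ne q$; as a self-contained proof of the stated identity, yours is complete and arguably cleaner.
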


When this formula is  compared with our earlier formula
 \begin{eqnarray*}
  f_q(\frac a q) 
 =\frac{2\pi^2}{q^{3/2}}\left( \frac{a}{3}\sum_{n\le \frac{q-1}2} \chi_q(n) -\sum_{n=1}^a (a-n)\chi_q(n)\right)
 \end{eqnarray*}
we deduce the identity 
 $$ \frac{a}{3}\sum_{n\le \frac{q-1}2} \chi_q(n) -\sum_{n=1}^a (a-n)\chi_q(n)=\frac{q}{4}\left(\chi_q(a)-
\frac{1}{q^2} \sum_{c=1}^{q-1}c^2(\chi_q(c-a)-\chi_q(c+a))\right)
 $$
for $q\equiv 3 \bmod 4$.

 Now we indicate  another possible direction.
\begin{proposition} If
$$f_q(x)=0$$
then $x$ is a rational number.
\end{proposition}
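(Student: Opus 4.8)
The plan is to use the piecewise-linear description of $f_q$ from Corollary 1,
$$f_q(x)=\frac{2\pi^2 x}{\sqrt q}\Big(S_q(\tfrac q2)-S_q(qx)\Big),$$
and to observe that a continuous piecewise-linear function whose pieces carry rational data can vanish only at rational points, unless it is constant (and equal to $0$) on a whole piece. Since $f_q$ is odd and $1$-periodic, $x$ is rational iff its class in $[0,1)$ is, so I may take $x\in(0,1)$; and since $S_q(qx)=0<h(q)$ for $0<x<1/q$, there are no zeros very near $0$.

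First I would localize. Writing $A_m=\sum_{n\le m}\chi_q(n)$ and $B_m=\sum_{n\le m}n\chi_q(n)$, both integers, and using $h(q)=S_q(q/2)\in\mathbb Z$ (Dirichlet), for $x\in[m/q,(m+1)/q)$ one has $S_q(qx)=A_m-B_m/(qx)$ and hence
$$f_q(x)=\frac{2\pi^2}{\sqrt q}\Big((h(q)-A_m)\,x+\frac{B_m}{q}\Big).$$
Thus on each piece $f_q$ is affine in $x$, and both coefficients are rational multiples of the single constant $2\pi^2/\sqrt q$. If $A_m\ne h(q)$, the unique zero of this affine function is $x=B_m/\big(q(A_m-h(q))\big)\in\mathbb Q$; as the breakpoints $m/q$ are rational too, every zero lying on a piece with $A_m\ne h(q)$, and every zero at a breakpoint, is rational.

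The only escape is a zero interior to a piece with $A_m=h(q)$. There $f_q(x)=2\pi^2 B_m/q^{3/2}$ is constant, so either $B_m\ne0$ and $f_q$ has no zero on the piece, or $B_m=0$ and $f_q\equiv0$ on the whole interval $(m/q,(m+1)/q)$, producing a continuum of irrational zeros and falsifying the statement. Hence the entire content of the Proposition is the exclusion of this degenerate ``flat zero segment'': there must be no $m\in\{1,\dots,q-1\}$ with $A_m=h(q)$ and $B_m=0$ simultaneously. This is the step I expect to be the main obstacle, precisely because $f_q$ is genuinely only piecewise-linear and not real-analytic, so the reflex ``a nonzero analytic function has isolated zeros'' is unavailable; moreover $B_m=0$ does occur on its own (for instance $B_4=0$ when $q=19$), so one cannot dispose of it cheaply — what must be shown is that it never coincides with $A_m=h(q)$.

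To carry out that exclusion I would proceed in two stages. For the soft part, the slope of $f_q$ jumps by exactly $-\tfrac{2\pi^2}{\sqrt q}\chi_q(m)=\pm\tfrac{2\pi^2}{\sqrt q}\ne0$ at each breakpoint $m/q$ (since $\chi_q(m)=\pm1$ for $0<m<q$), so the pieces of zero slope are isolated; in particular at $x=1/2$ one has $A_{(q-1)/2}=3h(q)\ne h(q)$ for $q\equiv3\bmod8$, so $f_q$ crosses zero transversally and $x=1/2$ is a genuine rational zero. For the hard part, summation by parts gives $B_m=mA_m-\sum_{n<m}A_n$, so the degenerate conditions become $A_m=h(q)$ together with $\sum_{n=1}^{m-1}A_n=m\,h(q)$; I would attack this using the finite class-number relations available for $q\equiv3\bmod8$, namely $\sum_{n\le(q-1)/2}\chi_q(n)=3h(q)$ and $\sum_{n\le q-1}n\chi_q(n)=-q\,h(q)$, to constrain and ultimately contradict the existence of such an $m$. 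I expect this last step to require real information about the partial character sums $A_m$ rather than a purely formal manipulation, and it is where the difficulty of the Proposition is concentrated.
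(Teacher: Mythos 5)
Your decomposition is exactly the paper's: on $[m/q,(m+1)/q)$ one writes $S_q(qx)=A_m-B_m/(qx)$ with $A_m,B_m\in\mathbb Z$, so that $f_q(x)=0$ forces $(h(q)-A_m)x+B_m/q=0$; the paper compresses this to ``integer $-$ integer$/(qx)$ can only be rational if $x$ is rational'' and stops. What you add is an explicit statement of the case that one\mbox{-}liner silently assumes away: if $A_m=h(q)$ and $B_m=0$ simultaneously, then $f_q$ vanishes identically on the whole subinterval and the Proposition would be false outright. You are right that this is the crux, and you should know that it is a gap in the paper's own proof as well, not just in yours: the paper effectively concedes the point later, in the ``one final formula'' discussion, where the alternative $\sum_{n\le y}\chi_q(n)=h(q)$ together with $\sum_{n\le y}n\chi_q(n)=0$ is listed as a logical possibility and dismissed only as ``unlikely \dots as in that case there would be an interval on which $f_q(x)$ would be identically $0$.'' Since $f_q$ is merely piecewise affine (its distributional second derivative is a combination of point masses at the points $a/q$), no analyticity or isolated-zeros principle is available to exclude a flat zero segment, exactly as you say.

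That said, your proposal does not close the gap either: your final paragraph is a plan (partial summation plus the class-number identities for $q\equiv 3\bmod 8$), not an argument, and you acknowledge as much. So as written you prove the Proposition only under the extra hypothesis that no $m$ has $A_m=h(q)$ and $B_m=0$ --- which is precisely the hypothesis the paper's proof uses without stating. Everything up to that point is correct: the affine formula for $f_q$ on each piece, the rationality of the unique zero when $A_m\ne h(q)$, the unit jumps $\mp 2\pi^2\chi_q(m)/\sqrt q$ of the slope at the breakpoints, and the transversal rational zero at $x=1/2$ coming from $A_{(q-1)/2}=3h(q)\ne h(q)$. A defensible way to record what both arguments actually establish is the conditional form: every zero of $f_q$ is rational unless $f_q$ vanishes identically on some interval $[m/q,(m+1)/q)$.
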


\begin{proof}
By Corollary 1, $f_q(x)=0$ implies that $S_q(q/2)-S_q(qx)=0$. But $S_q(q/2)=h(q)$ is an integer. So $f_q(x)=0$ implies that
$S_q(qx)$ is a rational number. Now
$$S_q(qx)=\sum_{n\le [qx]} \chi_q(n)(1-\frac{n}{qx})=\sum_{n\le [qx]} \chi_q(n) -\frac{\sum_{n\le [qx]} {n}\chi_q(n)}{qx}.
$$
This has the shape $\mbox{integer} - \frac{\mbox{integer}}{qx}$ which can only be rational  if $x$ is a rational number.
\end{proof}
So, it suffices to show that $f_q(x)$ has no rational zeros; perhaps a congruence argument could work.
However, Theorem 5 is not much use here because the hypothetical  $x$ for which $f_q(x)=0$ would likely have a denominator that is divisible by $q$, so the conditions of Theorem 5 don't hold.

We remark that there are rational values of $x$ for which the numerator of $f_q(x)$    is congruent to 0 modulo $q$; for example
$$f_{19}(25/76)=\frac{19}{25} \qquad f_{19}(29/190)=\frac{19}{29} \qquad f_{19}(30/209)=\frac{19}{30}.$$
These examples, which all seem to have an $x$ with denominator divisible by $q$,  might be worth studying further.

  Here is one final formula that may or may not be useful.
Suppose that $f_q(x)=0$. Let $y=xq$. Then either
$$ \sum_{n\le y}\chi_q(n)=h(q) \qquad \mbox{and} \qquad \sum_{n\le y}n\chi_q(n)=0$$
or else  $y$ satisfies
\begin{eqnarray*} y=\frac{\sum_{n\le [y]}n\chi_q(n)}{\sum_{n\le [y] }\chi_q(n)-h(q)}.
\end{eqnarray*}
The first alternative seems unlikely as in that case there would be an interval on which $f_q(x)$ would be identically 0.

\section{Conclusion}
  Conjecture 1 has been checked for primes up to $10^9$ and it holds for those primes.  However, probabilistic grounds
  call into question it's truth for all primes $q\equiv 3 \bmod 8$. Of course, one only needs it's truth for a set of characters 
$\chi_q$ for which $\chi_q(n)=\lambda(n) $ for all $n\le N_q$ where $N_q\to \infty$ with $q$. 
Presumably something like this is correct (and should be  equivalent to RH), but it is not clear how to proceed.
But the results of section 6,  suggest a slightly alternative way forward which may have a more arithmetic flavor.

\enddocument 

The idea to proceed is to show that
the numerator of 
$$W(q,x):=h(q)-\sum_{n\le qx} \chi_q(n)(1-\frac{n}{qx} ) $$
is not  congruent to 0 modulo $q$ for every rational $x< 1/2$. 
If this is true then RH follows. We conjecture that this is correct.

\begin{conjecture}If $0<x<1/2$ is a rational number, then for any prime  $q\equiv 3 \bmod 8$, the numerator of
\begin{eqnarray*}   W(q,x)  
\end{eqnarray*} is not divisible by $q$.
\end{conjecture}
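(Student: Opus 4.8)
The plan is to turn the conjecture into an explicit congruence for an incomplete character sum and then to locate where the real difficulty sits. Writing $x=a/d$ in lowest terms and $M=\lfloor qx\rfloor$, Corollary 1 gives
$$W(q,x)=h(q)-\sum_{n\le M}\chi_q(n)+\frac{d}{qa}\sum_{n\le M}n\chi_q(n)=\frac{qa\,U+d\,V}{qa},$$
with $U=h(q)-\sum_{n\le M}\chi_q(n)$ and $V=\sum_{n\le M}n\chi_q(n)$ integers, so the divisibility of the numerator by $q$ is governed by $N:=qa\,U+d\,V\equiv d\,V\pmod q$. My first step would be to separate the regimes $q\nmid d$ and $q\mid d$, because they behave completely differently.

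In the generic case $q\nmid d$ the factor $d$ is invertible modulo $q$, so $q\mid N$ precisely when $q\mid V=\sum_{n\le M}n\chi_q(n)$, and after reduction to lowest terms the numerator acquires a factor $q$ only in the more degenerate situation $q^2\mid N$. Thus in this regime the conjecture reduces to showing that the incomplete sum $\sum_{n\le M}n\chi_q(n)$ is not divisible by $q$, with a $q^2$-refinement in the borderline sub-case. Here I would import the exact evaluations already in play: the identity $\sum_{n=1}^{q-1}n\chi_q(n)=-q\,h(q)$ behind Dirichlet's formula, the oddness of $h(q)$ for $q\equiv3\bmod4$ from genus theory (so that $q\nmid h(q)$), and the Bernoulli-polynomial formula $L(1-r,\theta)=-\tfrac{m^{r-1}}{r}\sum_b\theta(b)B_r(b/m)$ used to derive Theorem 5. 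The goal would be to read off $\sum_{n\le M}n\chi_q(n)\bmod q$ from a truncated generalized Bernoulli expression and to rule out its vanishing; the explicit shape $B_2(t)=t^2-t+\tfrac16$ makes the dependence on the cut-off $M$ transparent, which is encouraging.

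The case $q\mid d$ is the genuine obstruction, and here the statement as literally phrased is \emph{false}. Taking $q=19$ and $x=25/76$ (so $q\mid d$) one finds $U=-1$, $V=11$, hence $N=361=19^{2}$ and $W(19,25/76)=19/25$, whose numerator $19$ is divisible by $q$; the companion values $f_{19}(29/190)$ and $f_{19}(30/209)$ recorded earlier exhibit the same phenomenon. What these examples show is that ``numerator not divisible by $q$'' is only a convenient surrogate for what is actually needed, namely $W(q,x)\ne0$, and that the surrogate is faithful exactly when $q\nmid d$. I would therefore either add the hypothesis that the denominator of $x$ be coprime to $q$, or replace the conclusion by $W(q,x)\ne0$. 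For $q\mid d$ one writes $d=qe$, so that $qx=a/e$ and $W=(a\,U+e\,V')/a$ with $V'=\sum_{n\le a/e}n\chi_q(n)$; then $W=0$ would force the integer identity $a\,U=-e\,V'$ with $\gcd(a,e)=1$, which I would try to exclude by a combined size-and-congruence argument rather than by a statement modulo $q$.

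Finally, the honest assessment of the main obstacle. Once $W(q,x)\ne0$ is secured for every rational $x\in(0,1/2)$ and every prime $q\equiv3\bmod8$, Proposition 1 forces $f_q$ to be zero-free on $(0,1/2)$; continuity together with $f_q(1/3)=\tfrac{\sqrt3}{2}L(2,\chi_q\chi_3)>0$ then yields $f_q\ge0$ on $[0,1/4]$, and Theorem 3 delivers the Riemann Hypothesis. This is precisely why the crucial step cannot be routine: controlling the incomplete sums $\sum_{n\le M}n\chi_q(n)$ modulo $q$ \emph{uniformly} over all cut-offs $M=\lfloor qx\rfloor$ is a cancellation problem for $\chi_q$ of the same depth as the Riemann Hypothesis it implies. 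I expect that no elementary congruence will close the gap, and that a genuine proof would require either an unforeseen algebraic identity tying every partial sum to $h(q)$, or an analytic input at least as strong as the conclusion being sought.
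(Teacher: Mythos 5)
The statement you were asked to prove is labeled a conjecture, and the paper offers no proof of it --- it is put forward as an open problem whose truth would imply RH via Proposition 1 and Theorem 3 --- so there is no argument of the paper's to compare yours against. Your submission is likewise not a proof, and you say so; what you do instead is reduce the statement to an explicit congruence and then test it, and here you land on something the paper glosses over: the conjecture as literally stated is \emph{false}, and is refuted by the paper's own data. With $q=19$ and $x=25/76$ one has $h(19)=1$, $\lfloor qx\rfloor=6$, $\sum_{n\le 6}\chi_{19}(n)=2$ and $\sum_{n\le 6}n\chi_{19}(n)=11$, so $W(19,25/76)=-1+11\cdot\tfrac{76}{475}=\tfrac{361}{475}=\tfrac{19}{25}$, whose numerator is divisible by $q$, while $0<25/76<1/2$ and $19\equiv 3\bmod 8$. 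The paper records exactly these values in its list $f_{19}(25/76)=19/25$, etc., and even remarks that all such examples have denominator divisible by $q$, but it does not carry that observation back into the hypothesis of the conjecture. Your proposed repair --- either require the denominator $d$ of $x$ to be coprime to $q$, or replace the conclusion by $W(q,x)\ne 0$, which is all the application to RH actually uses --- is the right one, and your reduction showing that, when $q\nmid d$, the numerator is divisible by $q$ if and only if $q\mid \sum_{n\le \lfloor qx\rfloor} n\chi_q(n)$ is correct.

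Two caveats. First, your parenthetical that $q\nmid h(q)$ follows from the oddness of $h(q)$ is a non sequitur: an odd integer can be a multiple of $q$; what actually gives $q\nmid h(q)$ is the size bound $h(q)\ll \sqrt{q}\log q<q$. Second, and more importantly, nothing in your middle paragraph is the beginning of a proof: expressing $\sum_{n\le M}n\chi_q(n)$ through a truncated Bernoulli-polynomial identity does not by itself control its residue modulo $q$ uniformly over all cut-offs $M=\lfloor qx\rfloor$, and your own closing paragraph concedes that this uniform non-vanishing is a cancellation statement of essentially the same depth as the Riemann Hypothesis it would imply. That concession is honest and, in my view, accurate. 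So the correct summary is: you have not proved the conjecture (neither has the paper), you have disproved its literal statement with a counterexample already implicit in the paper, and you have correctly identified both the necessary amendment and the reason no elementary congruence argument is likely to close the remaining case.
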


There is an interesting statistical phenomenon that if one computes the values
$\rm{fac}(a) W(q,a/r)$ for $(a,r)=1$, then reduces these  modulo $q$ and arranges them form smallest to largest,
one finds that the first few values in this list are divisible by the class number $h(q)$. This might suggest that there is 
something  arithmetic going on here which prevents $W(q,x)$ from being divisible by $q$. So, maybe there is some hope to prove Conjecture 2.

 It seems that $\mbox{test}(p,q)=\mbox{test}_1(p,q)$ is an integer with some interesting properties. For example, occasionally $\mbox{test}(p,q)$ is divisible by $q$:
 $$\mbox{test}(1163, 3511)=2^5\cdot5 \cdot 3511 \qquad \mbox{test}(919, 3271)=2^3\cdot13\cdot 3271 \qquad \mbox{test}(719,2971)=2^2\cdot11\cdot 2971$$
  $$\mbox{test}(2087,6703)=2^3\cdot11 \cdot 6703 \qquad \mbox{test}(1427, 5431)=2^5\cdot7\cdot 5431 \qquad \mbox{test}(919,3671)=2^3\cdot3\cdot7\cdot 3671$$
 Also, $\mbox{test}(2851,4951)=2^2\cdot 151381$ and $151381 \equiv 2851 \bmod 4951$ and 
 $\mbox{test}(2851,4079)=2^2\cdot 345487$ and $345487\equiv 2851 \bmod 4079$ 

Same for (2351,2371), (2087,2099), (1847, 3491), (1847,3069), (719,1063)

\begin{conjecture} We conjecture that (\ref{eqn:RH}) holds for all primes $p$ and $q$ which are congruent to 3 modulo 8 for which $0<2a<p<q$ where $\chi_q(p)=\left(\frac p q\right)$
is the Legendre symbol modulo $q$.
\end{conjecture}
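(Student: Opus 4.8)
The plan is to convert the arithmetic inequality (\ref{eqn:RH}) into an analytic positivity statement and then into a monotonicity statement for partial character sums. First I would invoke Theorem 6: since $p^2 q^{5/2}>0$, the quantity $\mathrm{test}_a(p,q)$ is positive precisely when $f_q(a/p)>0$. Because $q$ is prime with $q\equiv 3\bmod 8$ it is squarefree and exceeds $3$, so Corollary 1 applies and gives $f_q(a/p)=\frac{2\pi^2(a/p)}{\sqrt q}\bigl(h(q)-S_q(qa/p)\bigr)$ with $h(q)=S_q(q/2)$. As the prefactor is positive, (\ref{eqn:RH}) is equivalent to $S_q(qa/p)<S_q(q/2)$, and since $0<2a<p$ forces $qa/p<q/2$, it suffices to prove the uniform statement that $S_q(N)$ attains its maximum on $[0,q/2]$ at the right endpoint $N=q/2$. (The congruence $q\equiv 3\bmod 8$ merely guarantees $\chi_q(2)=-1$, so that $\chi_q$ imitates $\lambda$ as in Remark 1; the positivity argument itself only uses $q\equiv 3\bmod 4$.)

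The key tool I would use is the integral representation $S_q(N)=\frac1N\int_0^N A(t)\,dt$, where $A(t)=\sum_{n\le t}\chi_q(n)$; this follows from $1-n/N=\frac1N\int_n^N dt$ and interchanging sum and integral. Differentiating gives $S_q'(N)=\frac1N\bigl(A(N)-S_q(N)\bigr)$, so $S_q$ is non-decreasing exactly where the character sum exceeds its own running average, and the endpoint-maximum claim would follow from the single inequality $A(N)\ge S_q(N)$ for $0<N\le q/2$. Here I would exploit the symmetry coming from $\chi_q(-1)=-1$: one checks $\chi_q(q-n)=-\chi_q(n)$, whence $A(q-1-m)=A(m)$ for integers $m$, so $A$ is symmetric about $q/2$ with $A(q-1)=0$. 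The target thus reduces to controlling $A(t)$ on the left half $[0,q/2]$, where the expected picture is that $A(t)\ge 0$ and, on average, rises from $A(0)=0$ up to a central plateau, which would force the running average $S_q$ to increase.

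The main obstacle is precisely this control of the partial character sum $A(t)$. Unconditionally $A$ oscillates, and even its non-negativity on $[0,q]$ — let alone the stronger $A(N)\ge S_q(N)$ — is not known for all $q$; by the P\'olya--Vinogradov inequality and the Montgomery--Vaughan bound quoted above, $A(t)$ can be as large as $\sqrt q\log\log q$ and its sign is delicate. This is exactly the phenomenon that ties the statement to the size of $L(1,\chi_q)$ through (\ref{eqn:inequality}), and it is why I expect a purely analytic proof to be essentially as hard as the Riemann-Hypothesis-type input it is meant to supply, consistent with the author's remark that Conjecture 1 is doubtful on probabilistic grounds.

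As an alternative that may sidestep the sign problem, I would attack (\ref{eqn:RH}) through its additive structure. Detecting the congruences $b\equiv\pm aq\bmod p$ by additive characters modulo $p$ turns the bracket in (\ref{eqn:RH}) into $\frac{-2i}{p}\sum_{j=1}^{p-1}\sin\frac{2\pi jaq}{p}\,T_j$ with $T_j=\sum_{b\le pq}b^2\chi_q(b)\,e(jb/p)$, and each $T_j$ is a complete second moment of $\chi_q$ twisted by an additive character, hence evaluable in closed form via Bernoulli sums exactly as in the derivation of Theorem 6. Since the author observes that $\mathrm{test}_a(p,q)$ is always an integer and is occasionally divisible by $q$, with the quotient frequently a multiple of $h(q)$, I would try to read enough $2$-adic and mod-$h(q)$ structure off this closed form to pin the sign arithmetically rather than analytically. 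Whether this arithmetic leverage can be pushed to force positivity for \emph{every} admissible triple $(a,p,q)$ is the crux, and I regard it as the genuinely open point.
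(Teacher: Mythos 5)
This statement is one of the paper's conjectures: the paper offers no proof of it, and by Corollary~3 a proof would imply the Riemann Hypothesis, so there is no argument of the author's to compare yours against. Your proposal is also not a proof, as you acknowledge, but the reduction in your first paragraph is correct and is essentially the one already built into the paper: by Theorem~5 (not Theorem~6 --- your numbering is off by one) one has $f_q(a/p)=\frac{\pi^2}{2p^2q^{5/2}}\,\mathrm{test}_a(p,q)$, and Corollary~1 turns positivity of $\mathrm{test}_a(p,q)$ into $S_q(qa/p)<S_q(q/2)=h(q)$ with $qa/p<q/2$. That is exactly Conjecture~1 evaluated at $x=a/p$, so your route establishes ``Conjecture~1 implies this conjecture'' but nothing more; the author has Conjecture~1 verified numerically to $10^9$ yet doubts it on probabilistic grounds.

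The concrete flaw is in the mechanism you propose for proving the endpoint-maximum property. The identity $S_q(N)=\frac1N\int_0^N A(t)\,dt$ and the formula $S_q'(N)=\frac1N\bigl(A(N)-S_q(N)\bigr)$ are both correct, but the sufficient condition you extract from them --- $A(N)\geqslant S_q(N)$ for $0<N\leqslant q/2$, together with the ``expected picture'' that $A(t)\geqslant 0$ on the left half --- is simply false. Take $q=19$ (prime, $19\equiv 3\bmod 8$): the quadratic residues are $1,4,5,6,7,9,11,16,17$, so $\chi_{19}(2)=\chi_{19}(3)=-1$, giving $A(3)=-1<0$ while $S_{19}(3)=\tfrac13$; consequently $S_{19}(4)=0<S_{19}(3)$ and $S_q$ is not monotone on $[0,q/2]$. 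The monotonicity route is therefore dead in principle, and what survives of your reduction is only the bare statement $\max_{N\leqslant q/2}S_q(N)=S_q(q/2)$, which is Conjecture~1 verbatim. Your alternative of detecting the congruences $b\equiv\pm aq\bmod p$ by additive characters and evaluating $T_j=\sum_{b\leqslant pq}b^2\chi_q(b)e(jb/p)$ via Bernoulli sums is a reasonable way to rederive Theorem~5, but you extract no sign information from it; the ``genuinely open point'' you flag at the end is the entire content of the conjecture, which remains open here as in the paper.
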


\begin{proposition} If $aq<p$, then 
\begin{eqnarray*}
\mbox{test}(a,p,q)= 4 a \cdot h(q)
\end{eqnarray*}
where $h(q)$ is the class number of $\mathbf Q(\sqrt{-q})$.
\end{proposition}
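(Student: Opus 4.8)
The plan is to evaluate the two inner sums defining $\mathrm{test}(a,p,q)$ completely explicitly, using the hypothesis $aq<p$ to make the residue classes modulo $p$ trivial to enumerate. Since $0<aq<p$, the integers $b$ with $1\le b\le pq$ and $b\equiv aq \bmod p$ are precisely $b=aq+kp$ for $k=0,1,\dots,q-1$, and those with $b\equiv -aq \bmod p$ are precisely $b=jp-aq$ for $j=1,\dots,q$; in each case one checks the range contains exactly $q$ values. The decisive observation is that $aq\equiv 0\bmod q$, so that $\chi_q(aq+kp)=\chi_q(kp)=\chi_q(p)\chi_q(k)$ and $\chi_q(jp-aq)=\chi_q(jp)=\chi_q(p)\chi_q(j)$. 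In particular the boundary terms $k=0$ and $j=q$ carry a factor $\chi_q$ of a multiple of $q$ and hence vanish, so both sums collapse to a single sum over the common index set $k=1,\dots,q-1$ weighted by $\chi_q(p)\chi_q(k)$.

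First I would subtract the two sums term by term at the common index $k$, where the quadratic weights combine through the difference of squares
$$(aq+kp)^2-(kp-aq)^2=4aqkp,$$
so that
$$\sum_{b\equiv aq}b^2\chi_q(b)-\sum_{b\equiv -aq}b^2\chi_q(b)=\chi_q(p)\,4aqp\sum_{k=1}^{q-1}k\,\chi_q(k).$$
Multiplying by $-\chi_q(p)$ and using $\chi_q(p)^2=1$ removes the Legendre factor entirely, leaving $\mathrm{test}(a,p,q)$ proportional to $a\sum_{k=1}^{q-1}k\,\chi_q(k)$. Here Dirichlet's class number formula enters in its finite form: from $L_q(1)=-\pi q^{-3/2}\sum_{n=1}^{q}n\,\chi_q(n)$ and $h(q)=\sqrt{q}\,L_q(1)/\pi$ one gets
$$\sum_{n=1}^{q-1}n\,\chi_q(n)=-q\,h(q),$$
which converts the character sum into the class number and produces $\mathrm{test}(a,p,q)$ as the asserted multiple of $a\,h(q)$, the explicit constant being fixed by the normalization recorded in Theorem 6.

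A clean independent check — and arguably the shortest route — comes from Corollary 1. The hypothesis $aq<p$ says exactly that $qx<1$ for $x=a/p$, so the truncated sum $S_q(qx)$ is empty and vanishes; Corollary 1 then gives $f_q(a/p)=\tfrac{2\pi^2 a}{p\sqrt q}\,h(q)$ outright, and comparing this with the expression for $f_q(a/p)$ in Theorem 6 lets one read off $\mathrm{test}(a,p,q)$ with no character-sum manipulation at all. I expect the only real obstacle to be bookkeeping rather than ideas: one must verify that the ranges for $k$ and $j$ contain precisely $q$ values (so the two sums share an index set once the boundary terms drop), that $aq<p$ is used correctly to place the smallest positive residues at $aq$ and $p-aq$, and that the normalizing factor $\tfrac{\pi^2}{2p^2q^{5/2}}$ of Theorem 6 (equivalently the direct value of $f_q(a/p)$) is tracked faithfully so that the final constant in front of $a\,h(q)$ is correct.
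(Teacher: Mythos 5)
The paper states this Proposition without any proof, so there is nothing to compare against; judged on its own, your plan is the right one and almost every step checks out. The enumeration of residues is correct (exactly $q$ values of $b$ in each class, with the $k=0$ and $j=q$ terms killed because $\chi_q$ vanishes on multiples of $q$), the difference-of-squares collapse to $4aqp\sum_{k=1}^{q-1}k\chi_q(k)$ is correct, the identity $\sum_{n=1}^{q-1}n\chi_q(n)=-q\,h(q)$ is correct, and the Corollary 1 shortcut (the sum $S_q(qa/p)$ is empty precisely because $aq<p$) is both correct and the most efficient route.

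The genuine gap is the one step you explicitly defer to ``bookkeeping'': the final constant, which here is the entire content of the statement. Carrying your own computation to the end gives
$$\mathrm{test}_a(p,q)=-\chi_q(p)\cdot\chi_q(p)\,4aqp\sum_{k=1}^{q-1}k\chi_q(k)=-4apq\cdot\bigl(-q\,h(q)\bigr)=4apq^2\,h(q),$$
and your Corollary 1 check confirms this: equating $f_q(a/p)=\tfrac{2\pi^2a}{p\sqrt q}h(q)$ with Theorem 6's $f_q(a/p)=\tfrac{\pi^2}{2p^2q^{5/2}}\,\mathrm{test}_a(p,q)$ forces $\mathrm{test}_a(p,q)=4apq^2h(q)$. (A small numerical test with $q=11$, $p=23$, $a=1$ gives $\mathrm{test}=11132=4\cdot23\cdot11^2\cdot h(11)$, not $4h(11)=4$.) So with the definition of $\mathrm{test}$ in (\ref{eqn:RH}) the answer is $4apq^2\,h(q)$, not $4a\,h(q)$: either the Proposition's $\mbox{test}(a,p,q)$ is tacitly the renormalization $\mathrm{test}_a(p,q)/(pq^2)$, which the paper never defines, or the statement is off by the factor $pq^2$. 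Asserting that ``the explicit constant is fixed by the normalization recorded in Theorem 6'' without computing it is not acceptable here, because tracking that normalization is precisely what the proof has to do; you should either prove the corrected identity $\mathrm{test}_a(p,q)=4apq^2h(q)$ or pin down the intended normalization before claiming the form $4a\,h(q)$.
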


We conjecture  that when $q$ is a prime congruent to 7 mod 8 that the maximum on the left hand-side of (\ref{eqn:inequality}) is achieved at $N=q/4$, and (as already conjectured for these $q$) that 
$\sum_{n\leqslant q/4} n\chi_q(n) >0$; these two conjectures together imply the Riemann Hypothesis.

 $$ \begin{tabular}{|c|c|c|c|c|c|c|c|c|c|c|c|c|c|c|c|c|c|}
  \hline
  n&1&2&3&4&5&6&7&8&9&10&11&12&13&14&15&16&17\\
  \hline
  $\lambda(n)$&1&-1&-1&1&-1&1&-1&-1&1&1&-1&-1&-1&1&1&1&-1\\
  \hline
  $\chi_q(n)$&1&-1&-1&1&-1&1&-1&-1&1&1&-1&-1&-1&1&1&1&-1\\
  \hline
  \end{tabular}
  $$

   \section{$f(a/q)$}
 We can express the value of $f$ at a rational number in terms of an average over Dirichlet L-functions at 2.  To do this we use
 \begin{lemma} If $(a,q)=1$ then 
 $$ e(a/q)=\frac{1}{\phi(q)} \sum_{\chi \bmod q} \tau(\overline{\chi}) \chi(a) $$
 \end{lemma}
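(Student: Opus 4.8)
The plan is to establish this standard identity by unwinding the definition of the Gauss sum and invoking the orthogonality relations for the Dirichlet characters modulo $q$. Throughout, I take the complete Gauss sum $\tau(\chi)=\sum_{m\bmod q}\chi(m)e(m/q)$, so that $\tau(\overline{\chi})=\sum_{m\bmod q}\overline{\chi}(m)e(m/q)$; this is the convention already used implicitly in the expansion of $\sin(2\pi an/p)$ in Section 7.

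First I would substitute this expression for $\tau(\overline{\chi})$ into the right-hand side, obtaining
$$\frac{1}{\phi(q)}\sum_{\chi\bmod q}\tau(\overline{\chi})\chi(a)=\frac{1}{\phi(q)}\sum_{\chi\bmod q}\sum_{m\bmod q}\overline{\chi}(m)\chi(a)e(m/q).$$
Next I would interchange the two finite sums and pull the additive character out of the sum over characters, so that the right-hand side becomes
$$\frac{1}{\phi(q)}\sum_{m\bmod q}e(m/q)\sum_{\chi\bmod q}\chi(a)\overline{\chi}(m).$$

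The terms with $(m,q)>1$ drop out immediately, since then $\overline{\chi}(m)=0$ for every $\chi$. For $(m,q)=1$ the inner sum is $\sum_{\chi\bmod q}\chi(am^{-1})$, which by orthogonality of Dirichlet characters equals $\phi(q)$ when $am^{-1}\equiv1\pmod q$ and vanishes otherwise. Because $(a,q)=1$, the congruence $am^{-1}\equiv1$ is equivalent to $m\equiv a\pmod q$, so exactly one residue survives. Collecting this single term gives $\frac{1}{\phi(q)}\cdot\phi(q)\cdot e(a/q)=e(a/q)$, which is the claim.

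The argument is entirely routine, and I do not expect any substantive obstacle. The only points demanding care are that one works with the complete Gauss sum rather than a primitive one, so that the derivation is valid for imprimitive as well as primitive characters, and that the residues $m$ with $(m,q)>1$ are discarded before orthogonality is applied, since $\chi(a)\overline{\chi}(m)$ collapses to $\chi(am^{-1})$ only when $m$ is invertible modulo $q$.
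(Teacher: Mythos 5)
Your proof is correct. The paper states this lemma without proof (it is the standard expansion of an additive character in terms of multiplicative ones), and your argument --- expanding the complete Gauss sum $\tau(\overline{\chi})=\sum_{m\bmod q}\overline{\chi}(m)e(m/q)$, interchanging the two finite sums, discarding the residues with $(m,q)>1$, and applying orthogonality of the characters modulo $q$ so that only $m\equiv a\pmod q$ survives --- is exactly the standard justification, with the hypothesis $(a,q)=1$ used precisely where it is needed.
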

 This leads to
 \begin{eqnarray*}
 f(a/q)&=&\Im  \frac{1}{\phi(q)} \sum_{\chi \bmod q} \tau(\overline{\chi}) \chi(a) 
 \sum_{n=1}^\infty \frac{\lambda(n)\chi(n) }{n^2}   \\
 &=&\Im  \frac{1}{\phi(q)} \sum_{\chi \bmod q} \tau(\overline{\chi}) \chi(a) \frac{L(4,\chi)}{L(2,\chi)}
 \end{eqnarray*}

Here is another example with  the tail end of the graph for $q=2647$:
\begin{center}
 \includegraphics[scale=0.9]{gr2.eps}
\end{center}
In this example with $h(2647)=15$, the inequality of Theorem 5 holds for $N\leqslant 2647/4$ as desired but you can see that it fails 
as $N/2647$ gets close to 1/2.

 We may examine Theorem 5 with $N=q/4$. It is interesting to consider the case where  $q\equiv 7 \bmod 8$, since then it is the case that $h(q)=\sum_{n\leqslant q/4} \chi_q(n)=h(q)$. So, if 
 $$T(q):=\sum_{n\leqslant q/4} n \chi_q(n)>0$$
 then the desired inequality of Theorem 5 holds (when $N=q/4$). Here is a list of values of $T(q)$ for primes $q\equiv 7 \bmod 8$:
 \begin{eqnarray*}&&
 1, 5, 10, 14, 29, 42, 57, 80, 111, 91, 130, 165, 180, 178, 191, 258, \
259, 327, 382, 322, 426, 533,\\ &&
562, 480, 601, 481, 723, 798, 965, 723, \
912, 1071, 885, 1254, 1339, 1125, 1142, 1125, 1340, \\&&1691, 1669, 1313, \
1906, 1599, 1982, 1855, 2024, 1656, 1796, 1851, 2500, 2680, 2663, \
2181, \\&&2720, 2247, 2991, 3002, 2668, 2941, 3374, 2432, 2775, 3329, \
2869, 3342, 2828, 2903, 3756, \\&&4468, 4085, 3356, 4753, 3405, 4002, \
4789, 5076, 5390, 4495, 4024, 4235, 4618, 5580, 4572, 6164,\\&& 5888,
6809, 5164, 6176, 5492, 5379, 5494, 6905, 5688, 7525, 6374,\dots
 \end{eqnarray*}
 We conjecture that $T(q)>0$ for all  primes $q\equiv 7 \bmod 8$.
 Here is a plot of the values
 \begin{center}
 \includegraphics[scale=0.9]{gr3.eps}
\end{center}

 In trying to construct  a counterexample to Conjecture 1, we replace the $\chi_q(n)$ with $q\equiv 3 \bmod 4$ with a completely multiplicative function $c(n)$ for which $c(2)=-1$ and $c(p)=1$ for all primes $p>2$. For this function we consider  
 $$ F_q(x):=x(T(q/2)-T(q x)) $$
 for various $q$ 
 where $$T(x)=\sum_{n\le x} c(n)(1-\frac {n}{x}).$$
 Here is a plot of $F_{241}(x)$ for $0\le x\le 1/2$:
 \begin{center}
  \includegraphics[scale=0.9]{gr241.eps}
\end{center}
 This may or may not be evidence for Conjecture 1.

  \section{Explicit Estimates}
 Here we gather some material with the goal of proving the converse to Theorem 1. 
 In this section we assume the Riemann Hypothesis throughout.
 
 The first estimate we need to make explicit is (see [MV1], Lemma 12.1] or [T, Theorem 9.6A])
 $$\frac{\zeta'}{\zeta}(s)=\frac{-1}{s-1}+\sum_{|\gamma-t|\leqslant 1} \frac{1}{s-\rho}+O(\log \tau)
 $$
 uniformly for $-1\leqslant \sigma \leqslant  2$ where $\tau=|t|+4$.
 
 This follows from Corollary 10.14 of [MV1]:
 \begin{eqnarray*}
 \frac{\zeta'}{\zeta}(s) =B +\frac 12 \log \pi -\frac{1}{s-1} -\frac 12 \frac{\Gamma'}{\Gamma}(s/2+1)+\sum_\rho\left(\frac{1}{s-\rho}+\frac 1 \rho\right)
 \end{eqnarray*}
 where 
 $$B=-0.0230957\dots.$$
 
 We have a second formula for $\frac{\zeta'}{\zeta}$ (see MV1, (13.35)]:
 \begin{eqnarray*}
 \frac{\zeta'}{\zeta}(s)= -\sum_{n\le xy}w(n)\frac{\Lambda(n)}{n^s}+\frac{(xy)^{1-s}-x^{1-s}}{(1-s)^2 \log y}
 -\sum_\rho\frac{(xy)^{\rho-s}-x^{\rho-s}}{(\rho-s)^2 \log y}
 -\sum_{k=1}^\infty \frac{(xy)^{-2k-s}-x^{-2k-s}}{(2k+s)^2 \log y}
\end{eqnarray*}

\end{document}